\theoremstyle{definition}
\newtheorem*{ack}{Acknowledgements}
\newcommand{\R}{\mathbb{R}}
\newcommand{\Z}{\mathbb{Z}}
\DeclareMathOperator{\A}{\text{Aff}}
\DeclareMathOperator{\I}{\text{Iso}}
\DeclareMathOperator{\G}{\text{GL}}
\DeclareMathOperator{\ort}{\text{O}}
\newcommand{\Id}{\text{Id}}
\theoremstyle{theorem}
\newtheorem{theorem}{Theorem}[section]
\newtheorem{lemma}[theorem]{Lemma}
\newtheorem{proposition}[theorem]{Proposition}
\newtheorem{corollary}[theorem]{Corollary}
\theoremstyle{definition}
\newtheorem{definition}[theorem]{Definition}
\newtheorem{remark}[theorem]{Remark}
\author[Karla Garc\'{\i}a]{Karla Garc\'{\i}a*}
\title{Moduli Spaces of Flat Riemannian Metrics on 3- and 4-dimensional Closed Manifolds}
\begin{document}


\maketitle

\markright{ \uppercase{Moduli Spaces of Flat Riemannian Metrics on Closed Manifolds}}

\begin{abstract}
We describe the topology of the moduli spaces of flat metrics for all the 3-dimensional closed manifolds. We give an algebraic description of the moduli spaces for the 4-dimensional closed flat manifolds with a single generator in their holonomy and, in some cases, also study their topology.     
\end{abstract}

\let\thefootnote\relax\footnotetext{2020 Mathematics Subject Classification: 57K20, 58D27. 

Key words: Flat manifolds, Bieberbach groups, moduli space, subgroups of $\text{SL}(2, \mathbb{Z})$.

*Current affiliation: \address{Faculty of Science, Universidad Nacional Aut\'onoma de M\'exico (UNAM),}

\hspace{3.1cm} Coyoac\'an, Mexico city, Mexico. 
 
\hspace{0.1cm} E-mail address: \email{ohmu@ciencias.unam.mx} }

\section{Introduction and main results}

A flat manifold is a Riemannian manifold which admits a metric of zero sectional curvature, called a {\bf flat metric}. These manifolds are basically described by their fundamental group, which turns out to be a Bieberbach group \cite{wolf}. 

Let $M$ be a closed manifold. We denote by $\mathcal{R}(M)$ the space of all (complete) Riemannian metrics on $M$. We equip $\mathcal{R}(M)$ with the smooth compact-open topology (\cite{moduli}, \cite{hirsch}, \cite{corroj-b}). We would like to identify the metrics that are isometric, which leads us to consider the following action. Let Diff$(M)$ denote the group of self-diffeomorphisms of $M$, then Diff$(M)$ acts on $\mathcal{R}(M)$ by pulling back metrics. The quotient of $\mathcal{R}(M)$ by this action is the {\bf moduli space $\mathcal{M}(M)$} of Riemannian metrics on $M$. Our main object of study is the following. 

\begin{definition} \label{def moduli}
The {\bf moduli space of flat metrics $\mathcal{M}_{flat}(M)$} is the quotient of the space of (complete) Riemannian metrics with zero sectional curvature, $\mathcal{R}_{flat}(M)$, by the action of Diff$(M)$. 
\end{definition}
 
In \cite{wolf2}, Wolf described $\mathcal{M}_{flat}(M)$ in terms of the Bieberbach group of the manifold. The classification of the Bieberbach groups for dimensions 2 and 3 was also given by Wolf in \cite{wolf}, while for dimension 4 we use the classification given by Lambert in \cite{lambert}.  

For the 3-dimensional closed manifolds, a first study of $\mathcal{M}_{flat}(M)$ was undertaken by Kang in \cite{kang}. Here we do some amendments to Kang's work and complete it by studying the topology of the moduli spaces of flat metrics. Our main result in this case reads as follows.

\begin{theorem} \label{top3}
All moduli spaces of flat metrics over 3-dimensional closed manifolds are contractible, except for two cases, homeomorphic to $\mathbb{S}^1 \times \R^3$.
\end{theorem}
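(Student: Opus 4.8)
The plan is to reduce the problem, via Wolf's algebraic description of $\mathcal{M}_{flat}$ (\cite{wolf2}), to a quotient of a convex cone by a discrete group, and then to carry out that computation for each closed flat $3$-manifold using Wolf's classification (\cite{wolf}). Writing $M=\R^3/\Gamma$ with $\Gamma$ Bieberbach, translation lattice $\Lambda\cong\Z^3$, holonomy $H=\Gamma/\Lambda$, and holonomy representation $\bar\rho\colon H\to\G(3,\Z)$, I would first record the standard fact that a flat metric on $M$ amounts to a choice of inner product on $\R^3=\Lambda\otimes\R$ making $\bar\rho(H)$ orthogonal. The relevant ``Teichm\"uller'' object is therefore the cone
\[
\mathcal{C}(M)=\{\, B\in\mathrm{Sym}^{+}(3): \bar\rho(h)^{\mathsf T}B\,\bar\rho(h)=B \ \text{for all}\ h\in H \,\},
\]
and Wolf's theorem identifies $\mathcal{M}_{flat}(M)\cong\mathcal{C}(M)/\Phi(M)$, where $\Phi(M)\le\G(3,\Z)$ is the group induced on $\mathcal{C}(M)$ (via $B\mapsto A^{\mathsf T}BA$) by the affine self-equivalences of $\Gamma$, equivalently the part of the normalizer $N_{\G(3,\Z)}(\bar\rho(H))$ realized by $\mathrm{Out}(\Gamma)$.

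The first structural observation is that $\mathcal{C}(M)$ is a convex cone, hence contractible, and that $\Phi(M)$ acts linearly and preserves the determinant (since $\det(A^{\mathsf T}BA)=\det B$ for $A\in\G(3,\Z)$). Splitting off the scale, I would write $\mathcal{C}(M)\cong\R_{>0}\times\mathcal{C}_1(M)$ with $\mathcal{C}_1(M)=\{B\in\mathcal{C}(M):\det B=1\}$ a contractible ($\mathrm{CAT}(0)$) slice on which $\Phi(M)$ acts, so that $\mathcal{M}_{flat}(M)\cong\R_{>0}\times\bigl(\mathcal{C}_1(M)/\Phi(M)\bigr)$. The analysis then splits by holonomy. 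For $H$ cyclic of order $3,4,6$ the rotation block forces the invariant forms to be block scalar, so $\dim\mathcal{C}(M)=2$ and the quotient is immediately contractible; for $H\cong\Z_2\times\Z_2$ only diagonal forms survive, $\mathcal{C}(M)\cong\R_{>0}^3$, and the finite group $\Phi(M)$ acting by signed permutations of the axes has quotient star-shaped toward the diagonal ray, hence contractible; and for $H$ trivial ($T^3$) one gets the moduli of flat $3$-tori $\mathrm{Sym}^{+}(3)/\G(3,\Z)$, whose coarse quotient I would show is contractible via Minkowski's reduction theory. The genuinely interesting case is $H\cong\Z_2$, where $\bar\rho(H)=\langle\mathrm{diag}(1,-1,-1)\rangle$ (or a conjugate) gives $\mathcal{C}(M)\cong\R_{>0}\times\mathrm{Sym}^{+}(2)$ of dimension $4$.

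In this last case every $A\in N_{\G(3,\Z)}(\bar\rho(H))$ must commute with the single nontrivial holonomy element, hence is block diagonal, and $\Phi(M)$ maps to a subgroup $\Phi'(M)\le\G(2,\Z)$ acting on the factor $\mathrm{Sym}^{+}(2)\cong\R_{>0}\times\mathbb{H}$. The whole dichotomy of the theorem then comes from the nature of $\Phi'(M)$: if $\Phi'(M)$ is finite or all of $\G(2,\Z)$, the quotient of $\mathbb{H}$ is contractible and so is $\mathcal{M}_{flat}(M)$; whereas if $\Phi'(M)$ is infinite cyclic generated by a parabolic (unipotent) element, then $\mathbb{H}/\Phi'(M)$ is an open cylinder $\cong\mathbb{S}^1\times\R$, and combining with the two scaling directions yields $\mathcal{M}_{flat}(M)\cong\R_{>0}\times\R_{>0}\times(\mathbb{S}^1\times\R)\cong\mathbb{S}^1\times\R^3$. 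I would then verify that exactly two of the flat $3$-manifolds, namely those with $\Z_2$ holonomy whose affine normalizer realizes only a parabolic shear on the transverse lattice, fall into the second alternative.

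The main obstacle is the faithful determination of $\Phi(M)$ for each manifold: one must decide precisely which elements of $N_{\G(3,\Z)}(\bar\rho(H))$ are actually induced by affine self-diffeomorphisms respecting the full extension data of $\Gamma$, not merely the holonomy action on $\Lambda$. This is exactly where Kang's computation (\cite{kang}) needs amending, and getting it right is what pins down whether $\Phi'(M)$ is parabolic or not. The remaining subtlety is the clean topological identification of the two exceptional quotients: one must check that the parabolic $\Z$-action is free on $\mathbb{H}$ with quotient an honest cylinder, and that the two $\R_{>0}$ scaling factors contribute genuinely free contractible directions, so that the product is homeomorphic to $\mathbb{S}^1\times\R^3$ rather than a twisted or singular variant.
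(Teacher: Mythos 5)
Your overall strategy coincides with the paper's: invoke Wolf's theorem to write $\mathcal{M}_{flat}=\ort(3)\backslash C_\pi/\mathcal{N}_\pi$, split off the scale, and run through the ten Bieberbach groups by holonomy type, with the cyclic order $3,4,6$ and $\Z_2\times\Z_2$ cases collapsing to products of half-lines, the torus case handled by Soul\'e, and the $\Z_2$ cases reducing to an action of a subgroup of $\G(2,\Z)$ on $\R^+\times\mathbb{H}^2$. Up to that point the reduction is sound.

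The gap is in your trichotomy for $\Phi'(M)$ in the $\Z_2$ case. You allow only three alternatives --- finite, all of $\G(2,\Z)$, or infinite cyclic generated by a parabolic --- and you attribute the two exceptional manifolds to the parabolic-cyclic alternative. Neither claim matches what actually happens. The two exceptional cases are $B_1$ and $B_2$, and for these the induced group on the transverse rank-two lattice is $\Gamma_0(2)$, respectively $\Gamma(2)\cdot\langle\begin{psmallmatrix}0&1\\1&0\end{psmallmatrix}\rangle$: these are index-$3$ subgroups of $\G(2,\Z)$ (non-elementary Fuchsian groups containing free subgroups, not cyclic and certainly not generated by a single shear). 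Your proposed dichotomy therefore does not cover the case that occurs, and following it literally you would either misclassify $B_1,B_2$ or be unable to conclude. The correct mechanism is different: one must compute a fundamental domain for $\Gamma_0(2)^+$ (resp.\ $\Gamma(2)\cdot\langle Y\rangle^+$) acting on $\mathbb{H}^2$ --- index $3$ in $\mathrm{SL}(2,\Z)$, three translates of the standard modular domain, with explicit side pairings --- and verify that the resulting orbifold is a genus-zero surface with two cusps, hence topologically an open cylinder $\mathbb{S}^1\times\R$. Only then does $\mathcal{M}_{flat}\cong\mathbb{S}^1\times\R\times(\R^+)^2\cong\mathbb{S}^1\times\R^3$ follow. (The contractible $\Z_2$ case $G_2$ does realize the full $\G(2,\Z)$, as you anticipate, with quotient $\mathbb{H}^2/\mathrm{SL}(2,\Z)\cong\mathbb{S}^2\setminus\{*\}$.) A secondary, smaller point: your appeal to ``Minkowski reduction'' for the contractibility of $\ort(3)\backslash\G(3,\R)/\G(3,\Z)$ is not by itself a proof, since a quotient of a contractible space by a properly discontinuous action need not be contractible; the paper cites Soul\'e's explicit analysis for this.
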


The affine equivalent classes of the 4-dimensional closed flat manifolds are listed by Lambert in \cite{lambert}. As a first step toward the study of the 4-dimensional case, we decided to compute $\mathcal{M}_{flat}(M)$ for the family given by those 4-dimensional closed flat manifolds with one generator in their holonomy.
 
Let us introduce the following notation, which we will use in the Theorem below. Remember that $\G(n, \Z)$ are the integer matrices with determinant $\pm 1$, and consider a matrix of the form 
\[
X= \begin{psmallmatrix}
a & b \\ c & d \end{psmallmatrix} \quad  \text{or} \quad  \begin{psmallmatrix}
a & b & c  \\ 
d & e & f  \\
g & h & i  \end{psmallmatrix} \quad \text{depending on the dimension}.
\]
\begin{equation*}
\begin{aligned}
\Gamma_0 (2) & = \{ X  \in \G(2, \Z) \mid c \equiv 0 \; \text{mod} \; 2\},\\
\Gamma_0 (2)^t & = \{ X^t  \mid X \in \Gamma_0 (2)\},\\  
\Gamma (2) & = \{ X \in \G(2, \Z) \mid c,b \equiv 0 \; \text{and} \; a,d \equiv 1 \; \text{mod} \; 2\}, \\
\Gamma_0(3) & = \left\{ X \in \G(2, \Z) \mid b \equiv 0 \; \text{mod} \; 3 \right\}, \\
\Gamma(3) & = \left\{ X \in \G(2, \Z) \mid b,c \equiv 0 \; \text{mod} \; 3 \right\}, \\
\Gamma_0(4) & =\left\{ X \in \G(2, \Z) \mid b \equiv 0 \; \text{mod} \; 4 \right\}, \\
\Gamma_{0,1}(6)   & =  \left\{ X \in \G(2, \Z) \mid b \equiv 0 \; \text{and} \; d \equiv 1  \; \text{mod} \; 6 \right\},  \\
\Gamma_{0,5}(6)  & =  \left\{ X \in \G(2, \Z) \mid b \equiv 0 \; \text{and} \; d \equiv 5  \; \text{mod} \; 6 \right\}, \\
\Gamma_0(2)_3 & = \left\{ X \in \G(3, \Z) \mid d,g \equiv 0 \; \text{mod} \; 2 \right\}, \\
\Gamma(2)_3 & = \left\{ X \in \G(3, \Z) \mid d,c,f \equiv 0 \; \text{mod} \; 2 \right\}.
\end{aligned}
\end{equation*}

We are using the notation for the flat manifolds given in \cite{lambert}, later in preliminaries we will list them explicitly. 



\begin{theorem} \label{mod4}
The moduli space of flat metrics of the 4-dimensional closed manifolds with a single generator in their holonomy are:
\renewcommand{\labelenumi}{\arabic{enumi}.}
\begin{enumerate}
\item For $T^4$, $\mathcal{M}_{flat} = \ort(4) \backslash \G(4, \R)/ \G(4, \Z)$.
\item For $O^4_2$, $\mathcal{M}_{flat} = (\text{O}(2)\backslash \text{GL}(2,\mathbb{R}) /\text{GL}(2,\mathbb{Z})) \times (\text{O}(2)\backslash \text{GL}(2,\mathbb{R}) /\Gamma_0(2)).$
\item For $O^4_3$, $\mathcal{M}_{flat} = \left( \ort(2) \backslash \G(2, \R) / \Gamma_0(2) \right) \times \left( \ort(2) \backslash \G(2, \R) / \Gamma_0(2)^t \right) $.
\item For $O^4_4$, $\mathcal{M}_{flat} = (\ort(2) \backslash \G(2, \R) / \Gamma_0(3)) \times \R^+$.
\item For $O^4_5$, $\mathcal{M}_{flat}= (\ort(2) \backslash \G(2, \R) / \Gamma(3)) \times \R^+$.
\item For $O^4_6$, $\mathcal{M}_{flat} = \left( \ort(2) \backslash \G(2, \R) / \Gamma_0(4) \right) \times \R^+.$
\item For $O^4_7$, $\mathcal{M}_{flat} = \left( \ort(2) \backslash \G(2, \R) / \Gamma(2) \right) \times \R^+.$
\item For $O^4_8$, $\mathcal{M}_{flat} = ( \ort(2) \backslash \G(2, \R) / \left\langle \Gamma_{0,1}(6), \Gamma_{0,5}(6) \right\rangle ) \times \R^+.$
\item For $N^4_1$, $\mathcal{M}_{flat} = (\text{O}(3)\backslash \text{GL}(3,\mathbb{R}) /\Gamma_0(2)_3 )\times \R^+.$  
\item For $N^4_2$, $\mathcal{M}_{flat}= (\text{O}(3)\backslash \text{GL}(3,\mathbb{R}) /\Gamma(2)_3 ) \times \R^+.$ 
\item For $N^4_{14}$, $\mathcal{M}_{flat} = (\text{O}(3)\backslash \text{GL}(3,\mathbb{R}) /\G(3, \Z)) \times \R^+$.
\item For $N^4_{15}$, $N^4_{16}$, $N^4_{17}$, $N^4_{18}$, $N^4_{19}$, $N^4_{20}$, and $N^4_{21}$,  $\mathcal{M}_{flat} = (\R^+)^3 $.
\end{enumerate} 

\end{theorem}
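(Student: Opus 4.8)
The plan is to apply the description of $\mathcal{M}_{flat}(M)$ in terms of the Bieberbach group (following \cite{wolf2}) to each manifold on Lambert's list in turn. Write $M = \R^n/\Gamma$ with the defining sequence $1 \to \Lambda \to \Gamma \to \Phi \to 1$, where $\Lambda \cong \Z^n$ is the translation lattice and $\Phi$ is the holonomy group, which by hypothesis is cyclic; let $\theta \colon \Phi \to \G(n,\Z)$ be the induced integral holonomy representation. A flat metric on $M$ is the same datum as a $\Phi$-invariant inner product on $\R^n = \Lambda \otimes \R$, and two such inner products produce isometric manifolds exactly when they differ by an affine self-equivalence of $\Gamma$. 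So I would reduce the computation to the double coset of the space of $\theta(\Phi)$-invariant positive-definite symmetric forms by the normalizer $N_{\G(n,\Z)}(\theta(\Phi))$, the integral part of the group of affine self-equivalences. For $n = 3$ this is precisely the set-up behind Theorem \ref{top3}, and I would reuse and, where necessary, correct those computations.

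The first half of each case is to identify the invariant forms. I would decompose $\theta$ into $\R$-irreducible blocks; because the constituents on distinct blocks are non-isomorphic, every invariant form is block-diagonal and the Teichm\"uller space splits as a product over the blocks. A block on which $\Phi$ acts trivially, or as $-\Id$, admits all inner products and contributes the full symmetric space $\ort(k) \backslash \G(k,\R)$; this is why $O^4_2$, $O^4_3$ are products of two such factors. A two-dimensional block on which the generator acts as a rotation of order $\geq 3$ carries, by Schur's lemma, only a one-parameter ray of invariant forms and so contributes a single $\R^+$; likewise a nontrivial one-dimensional sign block contributes a single $\R^+$. When the real-irreducible constituents of the generator are pairwise non-isomorphic and each of this last type, the invariant forms are diagonal and the whole space is a product of such rays, giving the $(\R^+)^3$ for $N^4_{15},\dots,N^4_{21}$. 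This already pins down the global shape of every answer, including the trailing $\R^+$ from a rotation block and the $\ort(3)\backslash\G(3,\R)$ factors of $N^4_1$, $N^4_2$, $N^4_{14}$.

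The heart of the proof, and the step I expect to be the main obstacle, is the second half: computing $N_{\G(n,\Z)}(\theta(\Phi))$ and determining its image on each Teichm\"uller factor. I would solve the integral equations $N\,\theta(\phi)\,N^{-1} \in \theta(\Phi)$ for $N \in \G(n,\Z)$, but crucially I must also impose that $N$ respect the translational parts of $\Gamma$ modulo $\Lambda$; it is exactly this last, easily overlooked, condition that cuts the naive answer $\G(k,\Z)$ down to a congruence subgroup. I expect the resulting congruence conditions on the entries of $N$ to reproduce precisely $\Gamma_0(2)$, $\Gamma(2)$, $\Gamma_0(3)$, $\Gamma(3)$, $\Gamma_0(4)$, $\Gamma_0(2)_3$ and $\Gamma(2)_3$ in the respective cases, with the $\Gamma_0(2)$ versus $\Gamma_0(2)^t$ split for $O^4_3$ reflecting that the normalizer acts on the two two-dimensional blocks by transpose-related matrices. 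The subtlest instance is $O^4_8$: there a normalizing element may carry the order-$6$ generator either to itself or to its inverse, and these two possibilities contribute the cosets cut out by $\Gamma_{0,1}(6)$ and $\Gamma_{0,5}(6)$, so the relevant arithmetic group is the one they generate, $\langle \Gamma_{0,1}(6),\Gamma_{0,5}(6)\rangle$.

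Finally I would assemble the pieces: for each manifold the moduli space is the product, over the blocks, of the corresponding symmetric-space or $\R^+$ factor, quotiented on the left by $\ort(k)$ and on the right by the congruence subgroup found above, yielding the twelve formulas in the statement. The only genuinely delicate bookkeeping is checking that the left $\ort$-action and the right arithmetic action are compatible with the block decomposition, so that the double coset really splits as a product; I would verify this block by block, using that the invariant forms are block-diagonal and that the normalizer preserves this decomposition. The cases $T^4$ and $N^4_{14}$, where $\Phi$ imposes no constraint on the relevant block, serve as consistency checks, since there the arithmetic group is the full $\G(4,\Z)$, respectively $\G(3,\Z)$.
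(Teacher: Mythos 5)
Your proposal follows essentially the same route as the paper: Wolf's double-coset description $\ort(4)\backslash C_{\pi}/\mathcal{N}_{\pi}$, with the cone space computed blockwise from the invariant inner products (Proposition \ref{conespacedim4}) and the normalizer cut down to congruence subgroups by the translational parts of the generators (Propositions \ref{normalizadordim4or} and \ref{normalizerdim4non-or}), including the correct treatment of $O^4_8$ via conjugating the order-6 generator to a power of itself. The only point to keep in mind when executing it is that the matrix part of the normalizer is the set of $N$ for which \emph{some} $(N,x)$ normalizes $\pi$ (not just $(N,0)$), which matters for $O^4_3$, $O^4_7$ and $N^4_2$ where the normalizer is not a semidirect product.
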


For some of these cases we may say something about their topology. 

\begin{corollary} \label{top4}
The moduli spaces of flat metrics of the 4-dimensional manifolds with Bieberbach groups $O^4_2$ and $O^4_7$ are non-contractible, specifically they are homeomorphic to $\mathbb{S}^1 \times \R^5$ and to the product of $\R^2$ with a 3-punctured sphere, respectively. On the other hand, the moduli spaces of flat metrics of the 4-dimensional manifolds with Bieberbach groups $N^4_{14}$, $N^4_{15}$, $N^4_{16}$, $N^4_{17}$, $N^4_{18}$, $N^4_{19}$, $N^4_{20}$, and $N^4_{21}$ are contractible.
\end{corollary}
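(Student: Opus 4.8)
The plan is to feed the explicit descriptions from Theorem~\ref{mod4} into a topological analysis of two kinds of building blocks: the Euclidean factors $\R^+$ and the double cosets $\ort(n)\backslash\G(n,\R)/\Gamma$. Each $\R^+$ is contractible and $(\R^+)^k\cong\R^k$, so these factors never obstruct contractibility and only contribute Euclidean factors to the final homeomorphism type. In particular the cases $N^4_{15},\dots,N^4_{21}$ are immediate, since there $\mathcal{M}_{flat}=(\R^+)^3\cong\R^3$ is contractible. For the remaining factors I would use the standard identification $\ort(n)\backslash\G(n,\R)\cong P_n$, the open convex cone of positive definite symmetric $n\times n$ matrices, under which the coset of $g$ corresponds to $S=g^tg$ and the right action of $\gamma\in\Gamma\subseteq\G(n,\Z)$ becomes $S\mapsto\gamma^tS\gamma$. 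Because $\det(\gamma^tS\gamma)=\det S$ when $\det\gamma=\pm1$, the determinant is $\Gamma$-invariant and splits off an $\R^+$ factor, giving $\ort(n)\backslash\G(n,\R)/\Gamma\cong\R^+\times(P_n^1/\Gamma)$ with $P_n^1=\{\det=1\}$.

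For $N^4_{14}$ the nontrivial factor is $\ort(3)\backslash\G(3,\R)/\G(3,\Z)$, which is exactly the moduli space of flat metrics on the $3$-torus $T^3$. This is a $6$-dimensional space, hence not one of the two exceptional $4$-dimensional spaces $\mathbb{S}^1\times\R^3$ appearing in Theorem~\ref{top3}; therefore it is contractible by that theorem, and $\mathcal{M}_{flat}(N^4_{14})$, being this space times $\R^+$, is contractible as well.

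The heart of the argument is the case $n=2$, where $P_2^1$ is the hyperbolic plane $\mathbb{H}^2$ and the action $S\mapsto\gamma^tS\gamma$ is the (possibly reflection-decorated) M\"obius action, so that $\ort(2)\backslash\G(2,\R)/\Gamma\cong\R^+\times(\Gamma\backslash\mathbb{H}^2)$. The problem then reduces to recognizing the modular curves $\Gamma\backslash\mathbb{H}^2$. For $O^4_2$ one factor uses $\Gamma=\G(2,\Z)$, whose quotient is contractible, while the other uses $\Gamma_0(2)$, whose quotient has genus $0$ and two cusps and is thus an open cylinder, homotopy equivalent to $\mathbb{S}^1$; collecting the resulting Euclidean and $\mathbb{S}^1$ factors gives $\mathbb{S}^1\times\R^5$. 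For $O^4_7$ the group is $\Gamma(2)$, whose quotient has genus $0$ and three cusps, i.e.\ a thrice-punctured sphere; together with the two $\R^+$ factors this yields $\R^2$ times a thrice-punctured sphere, which is non-contractible.

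The step I expect to be the main obstacle is pinning down, for each case, the precise subgroup of $\G(2,\R)$ acting on $\mathbb{H}^2$ and its signature, namely the genus, the number of cusps, and the effect of the orientation-reversing (determinant $-1$) elements permitted by the congruence conditions. Such an involution can, in principle, collapse a cylinder or a thrice-punctured sphere onto a contractible surface with mirror boundary, so one must verify that the effective quotients are exactly the cylinder and the thrice-punctured sphere while the full-group quotients (for $\G(2,\Z)$ and $\G(3,\Z)$) remain contractible with no spurious boundary. Once this genus-and-cusp bookkeeping from the preceding sections is in hand, the stated homeomorphism types follow by assembling the contractible Euclidean factors with the one nontrivial modular factor in each case.
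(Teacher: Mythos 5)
Your proposal is correct and follows essentially the same route as the paper: split off the $\R^+$ factors, identify $\ort(2)\backslash\G(2,\R)/\Gamma$ with $\R^+\times(\mathbb{H}^2/\Gamma)$, dispose of $N^4_{14}$ via the contractibility of $\ort(3)\backslash\G(3,\R)/\G(3,\Z)$ (the paper cites Soul\'e directly rather than routing through Theorem \ref{top3}, but that is the same fact), and recognize the quotients for $\Gamma_0(2)$ and $\Gamma(2)$ as an open cylinder and a thrice-punctured sphere. The one methodological difference is that you invoke the standard genus-and-cusp data of the modular curves $Y_0(2)$ and $Y(2)$, whereas the paper computes explicit fundamental domains (index in $\mathrm{SL}(2,\Z)$, coset representatives written in terms of $S$ and $T$, and the border identifications); both yield the same surfaces, and your version is shorter at the cost of importing more modular-curve theory. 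The step you flag as the main obstacle --- whether the determinant $-1$ elements of $\G(2,\Z)$, $\Gamma_0(2)$, $\Gamma(2)$ induce a further involution that could collapse the cylinder or the thrice-punctured sphere --- is handled in the paper only by the reduction to the positive-determinant subgroups $\Gamma_0(2)^+$ and $\Gamma(2)^+$ asserted in the $T^2$ discussion, followed by the explicit border identifications of the fundamental domains; so the paper does not give a more detailed resolution of this point than you anticipate, and completing your outline would require exactly the same check.
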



\begin{remark}
The moduli space of flat metrics of the 4-dimensional torus is non-contractible, as proved by Tuschmann and Wiemeler in \cite{tuschwiem}.
\end{remark}

The organization of this paper is as follows. We start in Section 2 with some preliminaries, in Section 3 we explain the descriptions of the moduli spaces of flat metrics for dimension 3 and prove theorem \ref{mod4}, and finally in Section 4 we study their topology for some of the cases. 


\begin{ack} The results in this paper are part of my Ph.D. thesis \cite{Karlsthesis} developed under the supervision of Wilderich Tuschmann. I thank Prof. Tuschmann for presenting me this interesting research line and for his guidance. I thank Oscar Palmas and Ingrid Membrillo for comments on the first versions of the present manuscript, and  for useful conversations. This project was supported by the DFG, Research Training Group 2229.
\end{ack}

\section{Preliminaries}\label{S: Preliminaries}

Here we fix some notation. The group of affine transformations of $\R^n$, denoted by $\A(n)$, has the structure of a semidirect product $\A(n) = \G(n, \R) \ltimes \R^n $. The group of isometries of $\R^n$ denoted by $\I(n)$, also have the structure of a semidirect product $\I(n)= \ort(n)\ltimes \mathbb{R}^n $. 

We work with the following type of groups:
\begin{definition}
A {\bf Bieberbach group $\mathbf{\pi}$} is a discrete subgroup of $\I(n)$ that is torsion-free and such that $\R^n/\pi$ is compact. 
\end{definition}

We have that $(\R^n/\pi, \sigma)$, with $\sigma$ the metric induced from the usual metric of $\mathbb{R}^n$, is a closed flat manifold. On the other hand, let $M$ be a closed manifold with a flat metric $g$, then its universal cover with the metric induced from $g$ is isometric to $\mathbb{R}^n$ with the usual metric. In other words, $\R^n$ with the usual metric is a Riemannian covering of $(M,g)$ and we consider its group of deck transformations, denoted by $\pi$. Then $(M,g)$ is isometric to $(\R^n/\pi, \sigma)$, where $\pi$ is a Bieberbach group. Therefore a closed flat manifold is represented by its Bieberbach group and the Bieberbach theorems describe important properties about them. One of these properties is that two closed flat manifolds with isomorphic fundamental groups, are affinely equivalent. See \cite{charlap} or \cite{wolf}.

Consider the projection homomorphism
$$ 
\begin{array}{cccc}
\tau: & \text{Aff}(n) & \rightarrow & \text{GL}(n, \R) \\ 
      &   (A,v)       & \mapsto     &    A .  
\end{array}      $$

\begin{definition}
Let $\pi$ be a Bieberbach group. The {\bf holonomy} of $\pi$ is the subgroup of $\G(n, \R)$ given by $H_{\pi}\coloneqq  \tau (\pi)$. 
\end{definition}

The  kernel of $\tau$ restricted to $\pi$ is denoted by $L_{\pi}$. It is the maximal normal abelian subgroup of $\pi$, which consists of all the translations (Id, $v)$ of $\pi$. We have a short exact sequence     
\begin{equation} \label{exact seq}
 1 \rightarrow L_{\pi} \rightarrow \pi \rightarrow H_{\pi} \rightarrow 1. 
\end{equation}

We fix some notation in order to give the classification of the Bieberbach groups in the dimensions consider here. We shall denote by $e_1=(1,0,0)$, $e_2=(0,1,0)$ and $e_3=(0,0,1)$ the vectors of the standard basis of $\mathbb{R}^3$. The basic translations of $\mathbb{R}^3$ are denoted by $t_i= (\text{Id}, e_i) $. Also, the rotation matrix by an angle $\theta \in [0, 2\pi]$ is denoted as
$$ R(\theta)= \left( \begin{array}{cc}
\cos (\theta) & -\sin (\theta)    \\ 
\sin (\theta) & \cos (\theta)      \end{array} \right), \; \text{and the reflection as} \; E_0 = \left( \begin{array}{cc}
1 & 0    \\ 
0 & -1      \end{array} \right) .$$

We now enumerate the Bieberbach groups, along with their holonomy and their generators. 

\begin{theorem}[\protect{\cite{wolf}, \cite{kang}}] \label{clasif3}  There are only 10 Bieberbach groups in dimension 3 up to affine change of coordinates. The first six of them give orientable manifolds and the last four give non-orientable manifolds.
\renewcommand{\labelenumi}{\arabic{enumi}.}
\begin{enumerate}
\item $\mathbf{G_1=\text{T}^{\;3}}$: $H_{\pi}=\{ \text{Id} \}$, $\pi= \langle t_1, t_2, t_3 \rangle = \Z^3$.
\item $\mathbf{G_2}$: $H_{\pi}=\Z_2$, $\pi = \langle t_1, t_2, t_3, \alpha=(A, \frac{1}{2}e_1) \rangle$, 
  where $A=   \begin{psmallmatrix}
1 & 0   \\ 
0 & R(\uppi)     \end{psmallmatrix} .$   
\item $\mathbf{G_3}$: $H_{\pi}= \Z_3$, $\pi= \langle t_1, s_1=( \Id,  A(e_2)), s_2=( \Id, A^2(e_2)), \alpha =(A, \frac{1}{3}e_1) \rangle$,  where  $A =  \begin{psmallmatrix}
1 & 0   \\ 
0 & R(\frac{2\uppi}{3})   \end{psmallmatrix} $.
\item $\mathbf{G_4}$: $H_{\pi}= \Z_4$, $\pi= \langle t_1, t_2, t_3, \alpha =(A, \frac{1}{4}e_1 ) \rangle$, 
 where $A=  \begin{psmallmatrix}
1 & 0   \\ 
0 & R(\frac{\uppi}{2})     \end{psmallmatrix} $.
\item $\mathbf{G_5}$: $H_{\pi}=\Z_6$,  $\pi=\langle t_1, s_1=(\Id, A(e_2)), s_2=(\Id, A^2(e_2)), \alpha =(A, \frac{1}{6}e_1) \rangle$,  where $A= \begin{psmallmatrix}
1 & 0   \\ 
0 & R(\frac{\uppi}{3})   \end{psmallmatrix} .$ 
\item $\mathbf{G_6}$: $H_{\pi}=\Z_2 \times \Z_2$, $\pi = \langle t_1, t_2, t_3, \alpha =( A, \frac{1}{2}e_1) , \beta =(B, \frac{1}{2}(e_2 + e_3)) \rangle$,  where   
$A= \begin{psmallmatrix}
1 & 0   \\ 
0 & R(\uppi)     \end{psmallmatrix}$,    
$B= \begin{psmallmatrix}
-1 & 0    \\ 
0  & E_0   \end{psmallmatrix} $. 
\item $\mathbf{B_1}=\mathbb{S}^1 \times K^2$: $H_{\pi}=\Z_2$, $\pi = \langle t_1, t_2, t_3, \epsilon =(E, \frac{1}{2}e_1) \rangle$, where                           
$ E= \begin{psmallmatrix}
1 & 0    \\ 
0 & E_0  \end{psmallmatrix}$.
\item $\mathbf{B_2}$: $H_{\pi}=\Z_2$, $\pi = \langle t_1, t_2, s=(\text{Id}, \frac{1}{2}(e_1 + e_2) + e_3), \epsilon =(E, \frac{1}{2}e_1) \rangle$,  where 
$E$ is the same as in 7.  
 \item $\mathbf{B_3}$: $H_{\pi}=\Z_2 \times \Z_2$, $\pi = \langle t_1, t_2, t_3, \alpha =(A, \frac{1}{2}e_1), \epsilon =(E, \frac{1}{2}e_2) \rangle$,  where \\
$A= \begin{psmallmatrix}
1 & 0   \\ 
0 & R(\uppi)     \end{psmallmatrix} $, and $E$ is the same as in 7.
\item $\mathbf{B_4}$: $H_{\pi}=\Z_2 \times \Z_2$,  $\pi = \langle t_1, t_2, t_3, \alpha =(A, \frac{1}{2}e_1), \epsilon =(E, \frac{1}{2}(e_2 + e_3)) \rangle$,  where \\ 
$ A $ and  $ E$ are the same as in 9. 

\end{enumerate}

\end{theorem}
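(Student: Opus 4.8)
The statement is Bieberbach's classification in dimension three, so the plan is to reconstruct it from the three Bieberbach theorems together with an integral cohomology computation. By the first Bieberbach theorem the translation subgroup $L_{\pi}$ is a full lattice $\cong \Z^3$ of finite index, so after an affine change of coordinates we may take $L_{\pi} = \Z^3$; conjugation then identifies $H_{\pi}$ with a \emph{finite} subgroup of $\G(3,\Z)$ acting faithfully on $\Z^3$. By the second Bieberbach theorem affine equivalence coincides with abstract isomorphism, and by the third there are only finitely many classes, so it suffices to enumerate, up to the natural equivalence, the pairs consisting of a faithful integral holonomy representation $\rho\colon H \to \G(3,\Z)$ and a torsion-free extension $1 \to \Z^3 \to \pi \to H \to 1$ realizing $\rho$ via \eqref{exact seq}.

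First I would pin down the candidate holonomy groups. The crystallographic restriction — an element of $\G(3,\Z)$ of finite order $m$ must satisfy $\phi(m) \le 3$ — forces $m \in \{1,2,3,4,6\}$. Combined with the classification of finite subgroups of $\G(3,\Z)$ up to conjugacy (the integral, or $\Z$-, classes, equivalently the faithful integral holonomy representations) this produces a finite list of candidate point groups $H$ together with their representations $\rho$. For a cyclic candidate a generator acts as $R(2\pi/m)$ on a coordinate plane and as $\pm 1$ on the complementary axis, on which the reflection $E_0$ may also appear, while a $\Z_2 \times \Z_2$ candidate acts by commuting maps of this type.

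Next, for each representation $\rho$ I would compute the extension group $H^2(H; \Z^3_{\rho})$ and retain only the torsion-free classes. The decisive tool is the criterion that such an extension is a Bieberbach group precisely when, for every subgroup $C \le H$ of prime order, the restriction of its class to $H^2(C; \Z^3_{\rho})$ is nonzero (otherwise the extension splits over $C$ and thereby acquires torsion). This selection is exactly what eliminates every non-abelian point group and cuts the list down to the abelian holonomies $\{1\}$, $\Z_2$, $\Z_3$, $\Z_4$, $\Z_6$, and $\Z_2 \times \Z_2$; reducing the surviving classes modulo the action of the normalizer of $\rho(H)$ in $\G(3,\Z)$ (which absorbs the remaining affine freedom) yields unique representatives. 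Reading off a cocycle then gives the explicit translational parts $\tfrac{1}{2}e_1$, $\tfrac{1}{3}e_1$, $\tfrac{1}{2}(e_2+e_3)$, and so on recorded in the statement, and orientability is determined by whether every $\det \rho(h) = +1$.

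The main obstacle lies entirely in the last two paragraphs and is twofold: enumerating the integral representations without omission or duplication — the genus/$\Z$-class subtlety means that $\Z_4$, $\Z_6$, and especially $\Z_2 \times \Z_2$ admit several non-conjugate lattices, all of which must be found — and then carrying out the cohomology computation together with the torsion-free selection for each, checking that exactly ten inequivalent Bieberbach groups survive, six with orientation-preserving holonomy and four containing a reflection.
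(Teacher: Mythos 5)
The paper does not prove this theorem at all: it is imported verbatim from \cite{wolf} and \cite{kang} as the starting point for the moduli-space computations, so there is no in-paper argument to compare against. What you have written is a faithful outline of the proof in the cited sources (Wolf's Chapter 3): reduce to a faithful integral holonomy representation via the first Bieberbach theorem, identify affine equivalence with abstract isomorphism via the second, enumerate the finite subgroups of $\G(3,\Z)$ up to conjugacy, compute $H^2(H;\Z^3_\rho)$, and keep the classes whose restriction to every prime-order subgroup is nonzero, modulo the normalizer action. The strategy and every individual ingredient are correct, including the torsion-freeness criterion and the role of the $\Z$-class subtlety.

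The genuine gap is that the proposal never executes the part that constitutes the proof. A classification theorem is proved by the enumeration itself: you must actually list the conjugacy classes of finite subgroups of $\G(3,\Z)$ (there are $73$ arithmetic classes of point groups in dimension $3$, most of them non-abelian), compute the extension groups for each, and verify that exactly ten torsion-free classes survive and that they are pairwise non-isomorphic --- in particular that the two distinct $\Z_2\times\Z_2$ lattices behind $B_3$ and $B_4$ give non-isomorphic groups, and that no two of the ten coincide under some unnoticed conjugation. None of this is done, and the assertion that the torsion-freeness criterion ``eliminates every non-abelian point group'' is stated as a fact to be used rather than derived; it is true in dimension $3$ but only as the outcome of the case check you are omitting. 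A smaller point: the justification ``an element of order $m$ in $\G(3,\Z)$ satisfies $\phi(m)\le 3$'' is not quite the right statement of the crystallographic restriction (the correct constraint is on the sum of $\phi(d_i)$ over the cyclotomic factors of the characteristic polynomial, with $m$ the lcm of the $d_i$); it happens to yield the correct set $\{1,2,3,4,6\}$ here, but the argument as phrased would not survive in higher dimension. As it stands the proposal is a correct roadmap to the proof in \cite{wolf}, not a proof.
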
 

We will use analogous notations for dimension 4.

\begin{theorem}[\protect{\cite{lambert}}] \label{clasif4} 
There are 18 Bieberbach groups in dimension 4 up to affine change of coordinates, which have only one generator in their holonomy. The first eight of them give orientable manifolds and the last ten give non-orientable manifolds.
\renewcommand{\labelenumi}{\arabic{enumi}.}
\begin{enumerate}
\item $\mathbf{O^4_1} = T^4$: $H_{\pi}= \{ \text{Id} \}$, $\pi = \langle t_1, t_2, t_3, t_4 \rangle = \mathbb{Z}^4$. 
\item $\mathbf{O^4_2}$: $H_{\pi}=\mathbb{Z}_2$,
$\pi = \langle t_1, t_2, t_3, t_4, \alpha = (A, \frac{1}{2}e_4) \rangle $,
 where $A= \begin{psmallmatrix}
R(\uppi) & 0  \\
 0 & \text{Id}
  \end{psmallmatrix}. $
\item $\mathbf{O^4_3}$: $H_{\pi}=\mathbb{Z}_2$,
$\pi = \langle t_1, t_2, t_3, s= (\text{Id} , \frac{1}{2}(e_1 + e_4)) , \alpha = (A, \frac{1}{2}e_2) \rangle $, where \\ 
$A= \begin{psmallmatrix}
\text{Id} & 0 \\ 
0 & R(\uppi)   
\end{psmallmatrix}. $
\item $\mathbf{O^4_4}$: $H_{\pi}=\mathbb{Z}_3$, 
$\pi = \langle t_1, t_2, t_3, s= (\text{Id} , \frac{1}{2}e_3 + \frac{\sqrt{3}}{2} e_4)) , \alpha = (A, \frac{1}{3}e_2) \rangle $, 
 where  $A=  \begin{psmallmatrix}
\text{Id} & 0 \\ 
0 & R(\frac{2\uppi}{3})   
\end{psmallmatrix}. $
\item$\mathbf{O^4_5}$: $H_{\pi}=\mathbb{Z}_3$, 
$\pi = \langle t_1, t_2, s_1=(\text{Id} , -\frac{1}{3}e_2 + \frac{2\sqrt{3}}{3} e_3), s_2= (\text{Id} , \frac{1}{3}e_2 + \frac{\sqrt{3}}{3} e_3 + e_4), \\ \alpha = (A, \frac{1}{3}e_1) \rangle $, where $A$ is the same as in 4.
\item $\mathbf{O^4_6}$: $H_{\pi}=\mathbb{Z}_4$, 
$\pi = \langle t_1, t_2, t_3, t_4 , \alpha = (A, \frac{1}{4}e_2) \rangle $,
 where $A= \begin{psmallmatrix}
\text{Id} & 0 \\ 
0 & R(\frac{\uppi}{2})   
\end{psmallmatrix}. $
\item $\mathbf{O^4_7}$: $H_{\pi}=\mathbb{Z}_4$, 
$\pi = \langle t_1, t_2, s_1=(\text{Id} , \frac{1}{2}e_1 + \frac{1}{2}e_2 +e_3), s_2= (\text{Id} , \frac{1}{2}e_1 + \frac{1}{2} e_2 + e_4) , \\
\alpha = (A, \frac{1}{4}e_2) \rangle $,
 where $A$ is the same as in 6.
\item $\mathbf{O^4_8}$: $H_{\pi}=\mathbb{Z}_6$, 
$\pi = \langle t_1, t_2, t_3, s= (\text{Id} , \frac{1}{2}e_3 + \frac{\sqrt{3}}{2} e_4) , \alpha = (A, \frac{1}{6}e_2) \rangle $, where \\ 
$A= \begin{psmallmatrix}
\text{Id} & 0 \\ 
0 & R(\frac{\uppi}{3})   
\end{psmallmatrix}. $
\item $\mathbf{N^4_1}=K^2 \times T^2$: $H_{\pi}=\mathbb{Z}_2$, 
$\pi = \langle t_1, t_2, t_3, t_4 , \alpha = (A, \frac{1}{2}e_1) \rangle $,
 where $A= \begin{psmallmatrix}
\text{Id} & 0 \\ 
0 & E_0   
\end{psmallmatrix}. $
\item $\mathbf{N^4_2}$: $H_{\pi}=\mathbb{Z}_2$, 
$\pi = \langle t_1, t_2, t_3, s=( \text{Id}, \frac{1}{2}(e_3 + e_4)) , \alpha = (A, \frac{1}{2}e_1) \rangle $, where  $A $ is the same as in 9.
\item $\mathbf{N^4_{14}}$: $H_{\pi}=\mathbb{Z}_2$, 
$\pi = \langle t_1, t_2, t_3, t_4 , \alpha = (A, \frac{1}{2}e_4) \rangle $,
 where $A= \begin{psmallmatrix}
-\text{Id} & 0 \\ 
0 & -E_0   
\end{psmallmatrix}. $
\item $\mathbf{N^4_{15}}$: $H_{\pi}=\mathbb{Z}_4$, 
$\pi = \langle t_1, t_2, t_3, t_4 , \alpha = (A, \frac{1}{4}e_2) \rangle $,
 where $A= \begin{psmallmatrix}
-E_0 & 0 \\ 
0 & R(\frac{\uppi}{2})   
\end{psmallmatrix}. $
\item $\mathbf{N^4_{16}}$: $H_{\pi}=\mathbb{Z}_4$, 
$  \pi =  \langle s_1=(\text{Id}, {\scriptstyle \frac{1}{2}}(e_1+ e_2 +e_3)), s_2=(\text{Id}, {\scriptstyle \frac{1}{2}}(-e_1+ e_2 -e_3)) , \\
s_3=(\text{Id}, {\scriptstyle \frac{1}{2}}(-e_1- e_2 +e_3)), t_4 , \alpha = (A, {\scriptstyle \frac{1}{4}}e_4) \rangle , $ where $A= \begin{psmallmatrix}
- R(\frac{\uppi}{2}) & 0 \\
     0   &   -E_0     
\end{psmallmatrix}. $
\item $\mathbf{N^4_{17}}$: $H_{\pi}=\mathbb{Z}_4$, 
$\pi = \langle t_1, t_2, t_3, t_4 , \alpha = (A, \frac{1}{2}e_2) \rangle $,
 where $A= \begin{psmallmatrix}
0 & 1 & 0 \\
1 & 0 & 0 \\ 
0 & 0 & R(\frac{\uppi}{2})   
\end{psmallmatrix}. $
\item $\mathbf{N^4_{18}}$: $H_{\pi}=\mathbb{Z}_4$, 
$\pi = \langle t_1, t_2, s_1=(\text{Id}, {\scriptstyle \frac{1}{2}}(e_1+ e_2)+e_3), s_2=(\text{Id}, {\scriptstyle \frac{1}{2}}(e_1+ e_2)+e_4) , \alpha = (A, \frac{1}{4}(e_1 + e_2) + \frac{1}{2}e_3) \rangle $,
 where $A= \begin{psmallmatrix}
E_0 & 0 \\ 
0 & R(\frac{3\uppi}{2})   
\end{psmallmatrix}. $
\item $\mathbf{N^4_{19}}$: $H_{\pi}=\mathbb{Z}_6$, 
$\pi = \langle t_1, t_2, t_3 , s=(\text{Id}, \frac{1}{2}e_3 + \frac{\sqrt{3}}{2}e_4), \alpha = (A, \frac{1}{6}e_2) \rangle $, 
 where  $A= \begin{psmallmatrix}
-E_0 & 0 \\ 
0 & R(\frac{2\uppi}{3})   
\end{psmallmatrix}. $
\item $\mathbf{N^4_{20}}$: $H_{\pi}=\mathbb{Z}_6$, 
$\pi = \langle t_1, t_2, t_3 , s=(\text{Id}, \frac{1}{2}e_3 + \frac{\sqrt{3}}{2}e_4), \alpha = (A, \frac{1}{6}e_2) \rangle $,
 where  $A= \begin{psmallmatrix}
-E_0 & 0 \\ 
0 & -R(\frac{2\uppi}{3})   
\end{psmallmatrix}. $
\item $\mathbf{N^4_{21}}$: $H_{\pi}=\mathbb{Z}_6$, 
$\pi = \langle t_1, t_2, t_3 , t_4 , \alpha = (A, \frac{1}{6}e_1) \rangle $,
 where  $A= \begin{psmallmatrix}
1 & 0 & 0  & 0 \\ 
0 & 0 & -1  & 0 \\
0 & 0 & 0 &  -1  \\
0 & -1 & 0 & 0
  \end{psmallmatrix}. $
\end{enumerate}
\end{theorem}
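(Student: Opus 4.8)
\section*{Proof proposal for Theorem \ref{clasif4}}

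The plan is to reproduce Lambert's classification by the standard cohomological method for Bieberbach groups, specialized to cyclic holonomy. By the second and third Bieberbach theorems, a $4$-dimensional Bieberbach group $\pi$ with $H_\pi \cong \Z_m$ is determined up to affine equivalence by two pieces of data: a faithful integral representation $\rho \colon \Z_m \to \G(4,\Z)$, describing the action of the holonomy on the translation lattice $L_\pi \cong \Z^4$, and a cohomology class in $H^2(\Z_m, \Z^4_\rho)$ classifying the extension \eqref{exact seq}. Moreover the affine class depends only on the orbit of this data under $\text{Aut}(\Z_m)$ together with the normalizer of $\rho(\Z_m)$ in $\G(4,\Z)$. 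So I would reduce the theorem to enumerating these orbits, subject to the condition that the resulting extension be torsion-free.

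First I would bound the holonomy order $m$. A generator of $\rho(\Z_m)$ is a finite-order matrix $A \in \G(4,\Z)$, so its characteristic polynomial is a product of cyclotomic polynomials $\Phi_d$ of total degree $4$, with $m$ the lcm of the indices $d$ that occur. Since $\deg \Phi_d \le 4$ only for $d \in \{1,2,3,4,5,6,8,10,12\}$, and the orders $5,8,10,12$ (as well as every combination lacking a $\Phi_1$ factor) force $A$ to fix no nonzero vector, in those cases the fixed sublattice $(\Z^4)^{\Z_m}$ vanishes. For a cyclic group one has $H^2(\Z_m, M) \cong M^{\Z_m}/N_m M$, where $N_m = 1 + A + \cdots + A^{m-1}$ is the norm element, so the cohomology vanishes and the only extension is the split one $\Z^4 \rtimes \Z_m$, which has torsion and hence is not Bieberbach. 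This leaves $m \in \{1,2,3,4,6\}$.

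For each surviving order I would enumerate the $\G(4,\Z)$-conjugacy classes of faithful $\rho$ by decomposing $\Z^4$ into indecomposable $\Z[\Z_m]$-lattices (finitely many for $m = 2,3,4,6$ by the Diederichsen--Reiner classification), compute $H^2(\Z_m, \Z^4_\rho) \cong (\Z^4)^{\Z_m}/N_m \Z^4$, and isolate the classes whose extension is torsion-free --- equivalently, those for which the lift $(A,v)$ of a generator satisfies that $(A,v)^k$ has no fixed point in $\R^4$ for $0 < k < m$. Then I would cut down by the equivalence relation, acting by $\text{Aut}(\Z_m)$ and by the normalizer of $\rho(\Z_m)$, to obtain one representative per affine class; tabulating these should recover the explicit generators listed, and splitting according to the sign of $\det A$ separates the $8$ orientable cases ($\det A = +1$) from the $10$ non-orientable ones ($\det A = -1$).

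The main obstacle will be the bookkeeping in this last step: several representations share the same holonomy order and lattice type and are distinguished only by the finer torsion-free cohomology class and by the normalizer action, so correctly deciding when two extensions are affinely equivalent --- and thus landing on exactly $18$ without over- or under-counting --- is the delicate part. The orders $m = 4$ and $m = 6$ are the most laborious, since there $\Z[\Z_m]$ admits the richest family of indecomposable sublattices and the groups $H^2$ carry the most nontrivial special classes (for $m=6$ one must separate, e.g., the $\Phi_1^2\Phi_6$, $\Phi_1\Phi_2\Phi_3$, and $\Phi_1\Phi_2\Phi_6$ types realizing $O^4_8$, $N^4_{19}$, and $N^4_{20}$). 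Checking torsion-freeness case by case --- verifying that translational parts such as $\tfrac14 e_2$ or $\tfrac16 e_2$ cannot be conjugated away --- is where the explicit constructions in the list must be matched against the cohomological computation.
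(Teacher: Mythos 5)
The first thing to say is that the paper itself contains no proof of Theorem \ref{clasif4}: the statement is imported verbatim from Lambert's thesis \cite{lambert} (just as Theorem \ref{clasif3} is imported from Wolf and Kang), so there is no internal argument to compare yours against. What you outline is the standard Charlap--Zassenhaus cohomological classification, which is in substance the method of the cited source: record the pair (faithful $\Z[\Z_m]$-lattice structure on $L_\pi\cong\Z^4$, cohomology class in $H^2(\Z_m,\Z^4_\rho)$), classify up to the semilinear equivalence generated by $\text{Aut}(\Z_m)$ and the normalizer of $\rho(\Z_m)$ in $\G(4,\Z)$, and keep only torsion-free (special) classes. Your bound on the holonomy order is correct and cleanly argued: cyclotomic factors of total degree $4$, and when $\Phi_1$ does not divide the characteristic polynomial the fixed lattice vanishes, so $H^2(\Z_m,M)\cong M^{\Z_m}/N_mM=0$, only the split (torsion) extension survives, eliminating $m\in\{5,8,10,12\}$ and leaving $m\in\{1,2,3,4,6\}$ as in the list; your matching of the $\Phi_1^2\Phi_6$, $\Phi_1\Phi_2\Phi_3$, $\Phi_1\Phi_2\Phi_6$ types to $O^4_8$, $N^4_{19}$, $N^4_{20}$, and the orientability split by $\det A$, are also right.

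Two caveats. First, your torsion-freeness criterion is stated too weakly: it is not enough that the powers $(A,v)^k$, $0<k<m$, of one chosen lift be fixed-point free; one must check every element $(A^k, v_k+\lambda)$, $\lambda\in\Z^4$, of each coset --- equivalently, that the cohomology class restricts nontrivially to every prime-order subgroup of $\Z_m$. For instance, with $m=2$, $A=\begin{psmallmatrix}0&1\\1&0\end{psmallmatrix}$ on $\Z^2$ and $v=\frac12(e_1+e_2)$, the lift $(A,v)$ has no fixed point, yet $(A,v-e_1)$ is an involution (consistently, $H^2=0$ here, so the class is split). As literally stated, your test would let torsion groups through. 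Second, and more fundamentally, what you have is a correct roadmap rather than a proof: the entire content of the theorem is the exact tabulation of $18$ groups, and the enumeration of indecomposable $\Z[\Z_m]$-lattice decompositions of $\Z^4$, the $H^2$ computations, the isolation of special classes, and the normalizer-orbit identifications for $m=2,3,4,6$ --- which you yourself defer as ``bookkeeping'' --- are precisely where over- or under-counting would occur, so the argument as written cannot certify $18$ rather than $17$ or $19$. To complete it you would have to carry out that tabulation in full, which is exactly the computation done in \cite{lambert} (and in the crystallographic classifications of dimension $4$) that the paper chooses to cite rather than reprove.
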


For some of the cases we conjugate the representation in order to get a matrix in $\G(4, \Z)$ as the generator of the holonomy. With an abuse of notation, we denote them as before in the following lemma.  
\begin{lemma} \label{integerrep}
For some of the Bieberbach groups in Theorem \ref{clasif4}, we conjugate them in order to get the following representations.
\begin{itemize}
\item[$\triangleright$] For $O^4_4$, $\pi = \langle t_1, t_2, t_3, t_4 , \alpha = (A, \frac{1}{3}e_2) \rangle $, where $A= \begin{psmallmatrix}
\text{Id} & 0  & 0 \\ 
0 & 0 & -1  \\
0 & 1 & -1  \end{psmallmatrix}. $
\item[$\triangleright$] For $O^4_5$, $\pi = \langle t_1, t_2, s_1=(\text{Id} ,{\scriptstyle -\frac{1}{3}}e_2 -{\scriptstyle \frac{2\sqrt{3}}{3}} e_3 - {\scriptstyle\frac{2\sqrt{3}}{3}} e_4), s_2= (\text{Id} ,{\scriptstyle \frac{1}{3}}e_2 -{\scriptstyle \frac{2}{\sqrt{3}}} e_4 ) , \\
\alpha = (A, {\scriptstyle \frac{1}{3}}e_1) \rangle , $  where  $A= \begin{psmallmatrix}
\text{Id} & 0  & 0 \\ 
0 & 0 & -1  \\
0 & 1 & -1   \end{psmallmatrix}. $
\item[$\triangleright$] For $O^4_8$, $\pi = \langle t_1, t_2, t_3, t_4 , \alpha = (A, \frac{1}{6}e_2) \rangle $, where $A= \begin{psmallmatrix}
\text{Id} & 0  & 0 \\ 
0 & 0 & -1  \\
0 & 1 & 1  \end{psmallmatrix}. $
\item[$\triangleright$] For $N^4_{16}$, $\pi = \langle t_1, t_2, t_3, t_4 , \alpha = (A, \frac{1}{4}e_4) \rangle $, where $A= \begin{psmallmatrix}
-1 & 1 & 0  & 0 \\ 
-1 & 0 & 1  & 0 \\
-1 & 0 & 0 &  0  \\
0 & 0 & 0 & 1
  \end{psmallmatrix}. $
\item[$\triangleright$] For $N^4_{18}$, $\pi = \langle t_1, t_2, t_3, t_4 , \alpha = (A, \frac{1}{2}e_3) \rangle $, where $A= \begin{psmallmatrix}
1 & 0 & 1  & 0 \\ 
0 & -1 & 0  & -1 \\
0 & 0 & 0 &  1  \\
0 & 0 & -1 & 0
  \end{psmallmatrix}. $
\item[$\triangleright$] For $N^4_{19}$, $\pi = \langle t_1, t_2, t_3, t_4 , \alpha = (A, \frac{1}{6}e_2) \rangle $, where $A= \begin{psmallmatrix}
-E_0 & 0  & 0 \\ 
0 & -1 &  -1  \\
0 & 1 & 0   \end{psmallmatrix}. $
\item[$\triangleright$] For $N^4_{20}$, $\pi = \langle t_1, t_2, t_3, t_4 , \alpha = (A, \frac{1}{6}e_2) \rangle $, where $A= \begin{psmallmatrix}
-E_0 & 0  & 0 \\ 
0 & 1 &  1  \\
0 & -1 & 0  \end{psmallmatrix}. $
\end{itemize} 
\end{lemma}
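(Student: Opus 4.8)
The plan is to realize each holonomy generator as an integer matrix by passing to an integral basis of the translation lattice. The starting point is that the holonomy always preserves this lattice: since $L_\pi$ is the normal subgroup of pure translations in $\pi$, for any $(A,v)\in\pi$ and any $(\Id,\ell)\in L_\pi$ a direct computation gives $(A,v)(\Id,\ell)(A,v)^{-1}=(\Id,A\ell)$, so $A(L_\pi)=L_\pi$. As $L_\pi$ is a full-rank lattice in $\R^n$, I would fix a $\Z$-basis $f_1,\dots,f_n$ of it and let $P\in\G(n,\R)$ be the matrix with these vectors as columns. Conjugating $\pi$ by the affine map $(P,0)\in\A(n)$ sends $(B,w)\mapsto(P^{-1}BP,\,P^{-1}w)$; this carries $L_\pi$ onto $\Z^n$, so that the translations are generated by $t_1,\dots,t_n$, and simultaneously replaces each holonomy matrix $A$ by $P^{-1}AP$, which is the matrix of $A|_{L_\pi}$ in an integral basis and hence lies in $\G(n,\Z)$. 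Because $(P,0)$ is an affine transformation, this is an admissible change of coordinates in the sense of Theorem \ref{clasif4}.

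Carrying this out for the seven groups, I would first read off $L_\pi$ from the generators in Theorem \ref{clasif4}. For $O^4_4$, for example, the sublattice of the $e_3e_4$-plane spanned by $e_3$ and $s=\frac{1}{2}e_3+\frac{\sqrt3}{2}e_4$ is the hexagonal lattice, exactly the lattice left invariant by $R(\frac{2\uppi}{3})$; taking $f_3=e_3$ and $f_4=R(\frac{2\uppi}{3})f_3=-\frac{1}{2}e_3+\frac{\sqrt3}{2}e_4$, which is the lattice vector $s-e_3$, turns the rotation block into $\begin{psmallmatrix}0&-1\\1&-1\end{psmallmatrix}$, the companion matrix of $x^2+x+1$, while the translation part $\frac{1}{3}e_2$ of $\alpha$ is fixed by $P^{-1}$. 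The same recipe applies to $O^4_5$ and $N^4_{19}$, whose rotation blocks again have characteristic polynomial $x^2+x+1$, to $O^4_8$ and $N^4_{20}$, whose order-$6$ blocks have characteristic polynomial $x^2-x+1$, and to the order-$4$ groups $N^4_{16}$ and $N^4_{18}$, whose generators have characteristic polynomial $x^4-1$. In every case these polynomials are products of cyclotomic polynomials, so integral canonical forms exist, and the relevant invariant lattice supplies an integral basis realizing precisely the displayed companion-type matrix.

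What then remains is a direct verification for each group: evaluate $P^{-1}AP$ and confirm it equals the stated matrix, and evaluate $P^{-1}v$ on the translation part of $\alpha$ to confirm the displayed vector. I expect the only genuine subtlety — in an otherwise mechanical argument — to be choosing, for each rotation block, the specific integral basis $\{f,Af\}$ that produces exactly the canonical form written in the lemma rather than some $\G(n,\Z)$-conjugate of it, together with keeping track of how the auxiliary generators $s,s_1,s_2$ of Theorem \ref{clasif4} rewrite as words in $t_1,\dots,t_n$ once $L_\pi$ has been normalized to $\Z^n$. Since the target matrices are fixed and have the stated integral characteristic polynomials, such a basis always exists, and this last point reduces to a finite, explicit check.
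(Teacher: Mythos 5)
Your method is the paper's method. The paper's entire proof consists of listing, for each group, an explicit matrix $P$ and declaring that one conjugates by the affine map $(P,0)$; in the paper's convention this sends $(B,w)$ to $(PBP^{-1},Pw)$, and the listed $P$ is (block by block) the inverse of a matrix whose columns form a $\Z$-basis of the relevant part of $L_\pi$ -- exactly the construction you describe, up to the harmless difference of convention between $P$ and $P^{-1}$. Your justification that the holonomy preserves $L_\pi$ (so the conjugated matrix is integral) and your companion-matrix bookkeeping for the cyclotomic blocks correctly account for every displayed block, and the remaining work is, as you say, a finite computation.

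The one place where your uniform recipe does not prove the statement as written is $O^4_5$. There the lattice is spanned by $e_1$, $e_2$ and two vectors that mix $e_2$ with $e_3,e_4$, and the paper reuses the $O^4_4$ conjugation, which acts only on the last two coordinates; consequently the representation displayed in the lemma for $O^4_5$ still has non-integral translation generators $s_1,s_2$ -- the lattice is deliberately \emph{not} normalized to $\Z^4$, only the holonomy matrix is made integral. Your plan, taken literally (``this carries $L_\pi$ onto $\Z^n$, so that the translations are generated by $t_1,\dots,t_n$''), would instead produce a fully integral representation of $O^4_5$ whose holonomy matrix is \emph{not} the displayed block matrix $\begin{psmallmatrix}\text{Id}&0&0\\0&0&-1\\0&1&-1\end{psmallmatrix}$: writing $A$ in a $\Z$-basis of $L_\pi$ forces a nonzero entry in the $e_2$-row of the columns corresponding to the mixed lattice vectors (one finds $Av_1=-e_2-v_1+v_2$ for the basis read off from Theorem \ref{clasif4}). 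That alternative representation would be perfectly serviceable, but it is a different statement; to obtain the one in the lemma you must use the same block-diagonal $P$ as for $O^4_4$ and accept that the lattice stays non-standard. Everything else matches the paper.
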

\begin{proof}
We change the representation by conjugating with the affine transformation
$(P,0)$, where $P$ is: 
\[
\text{For} \; O^4_4 \; \text{and} \; O^4_5, \; P = \begin{psmallmatrix}
\text{Id} & 0  & 0 \\ 
0 & -1 & \frac{1}{\sqrt{3}}  \\
0 & -1 & -\frac{1}{\sqrt{3}} 
  \end{psmallmatrix}. \quad \text{For} \; O^4_8,\; N^4_{19} \; \text{and} \; N^4_{20}, \; P= \begin{psmallmatrix}
\text{Id} & 0  & 0 \\ 
0 & 1 & -\frac{1}{\sqrt{3}}  \\
0 & 0 & \frac{2}{\sqrt{3}}
  \end{psmallmatrix}.  
\]
\[
\text{For} \; N^4_{16}, \; P= \begin{psmallmatrix}
0 & 1 & 1  & 0 \\ 
-1 & 1 & 0  & 0 \\
-1 & 0 & 1 &  0  \\
0 & 0 & 0 & 1
  \end{psmallmatrix}. \quad \text{For} \; N^4_{18}, \; P = \begin{psmallmatrix}
1& 0 & -\frac{1}{2}  & -\frac{1}{2} \\ 
0 & 1 & -\frac{1}{2} & -\frac{1}{2}  \\
0 & 0 & 1 & 0  \\
0 & 0 & 0 & 1 
  \end{psmallmatrix}. 
\]
\end{proof}

The next result gives a description of the moduli space of flat metrics on a manifold depending only on its Bieberbach group $\pi$. We denote the normalizer of $\pi$ in $\A(n)$ by   
$\text{N}_{\text{Aff}(n)} (\pi) =\{ \gamma \in \text{Aff}(n) \mid \gamma \pi \gamma^{-1} = \pi \}.$  

\begin{theorem}[\protect{\cite{wolf2}}{ \bf Wolf}] \label{Wolf}
The subset 
$$\text{Iso}(n) \backslash \{ \gamma \in \text{Aff}(n) \mid \gamma \pi \gamma^{-1} \subset  \text{Iso} (n) \} /  \text{N}_{\text{Aff}(n)} (\pi)$$
of the double coset space Iso$(n) \backslash $ Aff$(n) / $ N$_{\text{Aff}(n)} (\pi)$, is in bijective correspondence with the set of all isometry classes of Riemannian manifolds that are affinely equivalent to $M=\R^n/ \pi$. The double coset 
Iso$(n)\cdot\gamma\cdot$N$_{\text{Aff}(n)} (\pi)$ corresponds to the isometry class of $\R^n/(\gamma \pi \gamma^{-1})$.    
\end{theorem}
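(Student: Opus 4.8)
The plan is to parametrize every Riemannian manifold affinely equivalent to $M = \R^n/\pi$ as a quotient $\R^n/\pi'$, where $\pi'$ is a Bieberbach group conjugate to $\pi$ inside $\A(n)$ and contained in $\I(n)$, and then to identify the two independent sources of redundancy with the right and left quotients in the stated double coset space. First I would set up the parametrization. By the third Bieberbach theorem (quoted in Section \ref{S: Preliminaries} via \cite{charlap}, \cite{wolf}), any closed flat manifold $M'$ affinely equivalent to $M$ is of the form $M' \cong \R^n/\pi'$ with $\pi'$ conjugate to $\pi$ in $\A(n)$; write $\pi' = \gamma\pi\gamma^{-1}$. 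For $M'$ to carry the flat Riemannian structure induced from the standard Euclidean metric on $\R^n$, i.e. for $\R^n \to M'$ to be a Riemannian covering, the deck group $\pi'$ must act by isometries, so $\pi' \subset \I(n)$. Conversely, for any $\gamma \in \A(n)$ with $\gamma\pi\gamma^{-1} \subset \I(n)$, the transformation $\gamma$ induces an affine equivalence $\R^n/\pi \to \R^n/(\gamma\pi\gamma^{-1})$, and $\gamma\pi\gamma^{-1}$ is again torsion-free, discrete and cocompact, hence Bieberbach. Thus $\gamma \mapsto \gamma\pi\gamma^{-1}$ surjects the numerator set $\{\gamma \in \A(n) \mid \gamma\pi\gamma^{-1}\subset\I(n)\}$ onto exactly the Bieberbach groups of interest.

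Next I would analyze the two redundancies and assemble the bijection. On the \emph{right}: one has $\gamma_1\pi\gamma_1^{-1} = \gamma_2\pi\gamma_2^{-1}$ precisely when $\gamma_2^{-1}\gamma_1$ normalizes $\pi$, i.e. when $\gamma_1 \in \gamma_2\,\text{N}_{\A(n)}(\pi)$; this is exactly the quotient on the right by $\text{N}_{\A(n)}(\pi)$, and it records when two conjugating elements yield the \emph{same} subgroup of $\I(n)$. On the \emph{left}: two subgroups $\pi_1',\pi_2'\subset\I(n)$ give isometric quotients if and only if they are $\I(n)$-conjugate. I would prove the forward direction by lifting: an isometry $f:\R^n/\pi_1'\to\R^n/\pi_2'$ lifts to an isometry $\tilde f$ of the universal covers, and since the universal cover of a complete flat manifold is Euclidean $\R^n$ (Killing--Hopf), $\tilde f \in \I(n)$; intertwining deck transformations then forces $\tilde f\,\pi_1'\,\tilde f^{-1} = \pi_2'$. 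The reverse direction is immediate: any $g\in\I(n)$ with $g\pi_1'g^{-1}=\pi_2'$ descends to an isometry of the quotients. Translating $\I(n)$-conjugacy back through the parametrization, $\pi_2' = g\pi_1'g^{-1}$ with $g\in\I(n)$ forces $\gamma_2 \in \I(n)\,\gamma_1\,\text{N}_{\A(n)}(\pi)$. Hence the isometry class of $\R^n/(\gamma\pi\gamma^{-1})$ depends only on the double coset $\I(n)\cdot\gamma\cdot\text{N}_{\A(n)}(\pi)$, and distinct double cosets give non-isometric manifolds, which is the asserted bijection.

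The hard part will be the lifting step in the left-hand redundancy: one must verify not only that an isometry between the closed flat quotients lifts to a global isometry of $\R^n$, but that this lift genuinely conjugates one deck group onto the other. This rests on the covering-space lifting theory for Riemannian coverings together with the classification of Euclidean isometries, namely that every isometry of $\R^n$ belongs to $\I(n)$. Care is needed to track that the lift $\tilde f$ is determined only up to composition with elements of $\pi_2'$, so that it is the $\I(n)$-\emph{conjugacy} class of the deck group, and not an individual lift, that is well defined; this is exactly what makes the left quotient by $\I(n)$ descend to isometry classes rather than to the subgroups themselves, and it is the point at which the double-coset formulation becomes forced rather than arbitrary.
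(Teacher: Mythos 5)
The paper does not prove this theorem --- it is quoted from Wolf \cite{wolf2} without proof --- so there is no in-paper argument to compare against; your reconstruction is the standard one and is correct. Both redundancies are identified properly (equality of conjugates for the right quotient by $\text{N}_{\text{Aff}(n)}(\pi)$, lifting isometries of the compact quotients to elements of $\I(n)$ conjugating the deck groups for the left quotient), and you correctly note that the lift is only defined up to the deck group, which is exactly why the correspondence is with double cosets.
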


Actually, there is an homeomorphism between this double quotient and the moduli space given in definition \ref{def moduli}; details can be found in \cite{Karlsthesis}.
 
The translation part does not bring any additional information to the expression of $\mathcal{M}_{flat}(M)$. Therefore, the moduli space of flat metrics of $M=\R^n/\pi$ is 
$$   \ort(n) \backslash \{ X \in \text{GL}(n, \R) \mid XH_{\pi}X^{-1} \subset \text{O}(n)  \} / \tau ( \text{N}_{\text{Aff}(n)} (\pi)). $$

\subsection{Notations} \label{notations}
In the next sections we use the following notations:
\begin{itemize}
\item[$\triangleright$] $H_1 \cdot H_2 :=\{ h_1\cdot h_2 \mid h_1\in H_1, \; h_2 \in H_2 \}$, where $H_1$ and $H_2$ are two subgroups of a given group.
\item[$\triangleright$] The cone space $C_{\pi}= \{ X \in \text{GL}(n, \R) \mid XH_{\pi}X^{-1} \subset \text{O}(n)  \}$.  
\item[$\triangleright$] The matrix part of the normalizer $\mathcal{N}_{\pi}= \tau ( \text{N}_{\text{Aff}(n)} (\pi))$. 
\item[$\triangleright$] The lattice of $\pi$, $L_{\pi}$, consists of all the translations (Id, $v)$ of $\pi$. The standard lattice is  $\Z^n=\langle (\Id , e_i) \mid \{ e_1, \dots , e_n \} \; \text{the standard basis of} \; \R^n \rangle $.
\end{itemize}

\section{Algebraic description}

In this section we give information of the moduli spaces $\mathcal{M}_{flat}(M)$ for the 3-dimensional closed manifolds and compute them for the family of 4-dimensional closed manifolds. First we need the representation of the Bieberbach group given in theorems \ref{clasif3} and \ref{clasif4}. Then we use Theorem \ref{Wolf} in order to describe the moduli space of flat metrics, which we express in the previous section as 
$$  \mathcal{M}_{flat}(\R^n/\pi)= \ort(n) \backslash C_{\pi} / \mathcal{N}_{\pi}. $$
Let us analyse the structure of the cone space and the matrix part of the normalizer.

\subsection{The cone space} 
The cone space $C_{\pi}$ is easy to analyze since it only depends on the holonomy. To describe the space $C_{\pi}$, one has to solve the equation: 
\begin{equation} \label{eqcone}
X \in \text{GL}(n, \R) \quad \text{such that} \quad (X^{t}X)A = A(X^{t}X) 
\end{equation}
for all $A\in H_{\pi}$ (see \protect{\cite[Lemma 2.2]{kangkimteich}}).

For the descriptions of $C_{\pi}$ in dimension 3 we refer to \cite{kangkimor} and \cite{kangkimteich}. We will give the description  for the 4-dimensional  closed flat manifolds with one generator in their holonomy. 

\begin{proposition} \label{conespacedim4}
The possible spaces $C_{\pi}$ for the 4-dimensional closed flat manifolds with a single generator in their holonomy are the following: 
\renewcommand{\labelenumi}{\arabic{enumi}.}
\begin{enumerate}
\item For trivial holonomy: $T^4$, the space is $C_{\pi}= \G(4, \R)$.
\item For $H_{\pi}= \Z_2$, the spaces are:  
\begin{enumerate}
\item For $O^4_2$ and $O^4_3$, $C_{\pi}= \ort(4) \cdot ( \G(2, \R)\times \G(2, \R) )$.
\item For $N^4_1$, $N^4_2$, and $N^4_{14}$,  $C_{\pi}=  \ort(4) \cdot ( \G(3, \R) \times \R^*)$.
\end{enumerate}

\item For cyclic holonomy of order bigger than 2, the spaces are:
\begin{enumerate}
\item For $O^4_4$, $O^4_5$, $O^4_6$, $O^4_7$ and $O^4_8$, $C_{\pi} = \ort(4) \cdot ( \G(2, \R) \times (\R^+ \times \ort(2)))$.
\item For $N^4_{15}$, $N^4_{18}$, $N^4_{19}$ and $N^4_{20}$, $C_{\pi} =  \ort(4) \cdot ((\R^+)^2 \times \ort(2)) \times (\R^+ \times \ort(2)))$. \\
For $N^4_{16}$, $C_{\pi} =  \ort(4) \cdot ((\R^+ \times \ort(2) \times (\R^+)^2 \times \ort(2)))$. \\
For $N^4_{17}$, $C_{\pi} =  \ort(4) \cdot ((\R^+ \times (0, \pi) \times \ort(2) ) \times (\R^+ \times \ort(2)))$. 
\item For $N^4_{21}$, $C_{\pi} =  \ort(4) \cdot (\R^* \times (\R^+ \times (0, \frac{2\pi}{3}) \times \ort(3)))$.
\end{enumerate}
\end{enumerate}
\end{proposition}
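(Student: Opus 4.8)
The plan is to exploit that $C_{\pi}$ depends only on the holonomy $H_{\pi}=\langle A\rangle$, which here is cyclic, so the condition defining $C_{\pi}$ in \eqref{eqcone} need only be imposed on the single generator $A$. First I would fix $A$ to be the \emph{orthogonal} representative of the holonomy generator given in Theorem \ref{clasif4}, not the integral one of Lemma \ref{integerrep} (which is reserved for the normalizer computation $\mathcal{N}_{\pi}$): only when $A\in\ort(n)$ does the requirement $XAX^{-1}\in\ort(n)$ simplify, via $A^{t}=A^{-1}$, to $(X^{t}X)A=A(X^{t}X)$. Thus $C_{\pi}=\{X\in\G(n,\R)\mid X^{t}X \text{ commutes with } A\}$.

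Next I would pass to the polar decomposition $X=OP$ with $O\in\ort(n)$ and $P$ symmetric positive-definite, so that $X^{t}X=P^{2}$; and $P^{2}$ commutes with $A$ if and only if $P$ does, since the positive square root is a uniform limit of polynomials in $P^{2}$ on the spectrum. Hence $C_{\pi}=\ort(n)\cdot\{P \text{ symmetric positive-definite} \mid PA=AP\}$, which reduces the whole problem to describing the positive-definite symmetric matrices commuting with $A$.

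The structural heart is then a block analysis. Because $A$ is orthogonal, $\R^{n}$ splits orthogonally into the $(+1)$- and $(-1)$-eigenspaces of $A$ together with the $2$-planes on which $A$ acts by a rotation $R(\theta)$ with $\theta\neq 0,\pi$. A symmetric matrix commutes with $A$ exactly when it is block-diagonal for this splitting and is a positive scalar on each rotation plane, since the only symmetric $2\times2$ matrices commuting with a nontrivial rotation are scalars. Reassembling through $\ort(n)$, each block contributes in a predictable way: a $d$-dimensional $\pm\text{Id}$-block imposes no constraint and gives the full $\G(d,\R)$; a rotation plane forces $X^{t}X=\lambda\,\text{Id}$, so $X$ is a similarity there and contributes $\R^{+}\cdot\ort(2)\cong\R^{+}\times\ort(2)$; and a $2$-plane carrying the eigenvalues $+1$ and $-1$ forces $X^{t}X$ to be diagonal in suitable coordinates, contributing $\ort(2)\cdot(\R^{+})^{2}$. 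This yields $C_{\pi}=\ort(n)\cdot\prod_{\text{blocks}}(\text{block cone space})$; the per-block orthogonal factors overlap with the global $\ort(n)$, but this is only a harmless redundancy in the set-level description.

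Finally I would run through the eighteen groups, read $A$ off Theorem \ref{clasif4}, and record the product: for $T^{4}$ no constraint gives $\G(4,\R)$; $O^{4}_{4}$–$O^{4}_{8}$ have $A=\text{diag}(\text{Id}_2,R(\theta))$ and hence $\G(2,\R)\times(\R^{+}\times\ort(2))$; the groups $N^{4}_{15}$–$N^{4}_{20}$ carry one $\{+1,-1\}$-plane and one rotation plane, producing a $(\R^{+})^{2}\times\ort(2)$ factor alongside an $\R^{+}\times\ort(2)$ factor; and so on. The main obstacle is not conceptual but the case bookkeeping, and especially the two blocks that are neither scalar nor a plain $\{+1,-1\}$ diagonal in the chosen basis: the swap block of $N^{4}_{17}$ and the order-$6$ signed-permutation block of $N^{4}_{21}$. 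For these I would solve $(X^{t}X)A=A(X^{t}X)$ directly in the given coordinates; the resulting $2$-dimensional cone of admissible positive-definite forms is best recorded by a scale in $\R^{+}$ together with an angular parameter (the intervals $(0,\pi)$ and $(0,\tfrac{2\pi}{3})$), and matching this parametrization against the orthogonal factor $\ort(2)$, respectively $\ort(3)$, is the one place demanding genuine care.
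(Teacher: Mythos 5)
Your proposal is correct and follows essentially the same route as the paper: both reduce, via the cyclicity of $H_{\pi}$ and the orthogonality of the generator $A$ from Theorem \ref{clasif4}, to solving $(X^{t}X)A=A(X^{t}X)$ for the single generator and then reading off $C_{\pi}=\ort(4)\cdot(\text{product})$ case by case. The only difference is presentational — you organize the solution through polar decomposition and the commutant of $A$ on symmetric positive-definite matrices, whereas the paper states the equivalent orthogonality and norm conditions directly on the columns of $X$ — and you correctly single out $N^4_{17}$ and $N^4_{21}$ as the cases needing a direct computation and an angular parameter.
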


\begin{proof}
We consider each case separately.

\noindent Case 1. When the holonomy is trivial, we have the result in the Corollary of Theorem 1 in \cite{wolf2}.

\noindent Case 2. For $H_{\pi}= \Z_2$. \\
When $H_{\pi}$ is generated by $A= \begin{psmallmatrix}
- \text{Id} & 0 \\ 
 0 & \text{Id}
  \end{psmallmatrix} $ or its negative $-A$, which is the case of $O^4_2$ and $O^4_3$, we get from equation \eqref{eqcone} that 
\begin{equation*}
\begin{aligned}
C_{\pi} & = \left\{ (x_1 \; x_2 \; x_3 \; x_4) \in \G(4, \R) \mid x_i \perp x_j \; \text{for} \; i=1,2 \; \text{and} \; j=3,4 \right\} \\
    & = \ort(4) \cdot \left( \begin{array}{cc} \G(2, \R) & 0 \\
0 & \G(2, \R) 
\end{array} \right). 
\end{aligned}
\end{equation*}   
When the generator of $H_{\pi}$ is $A= \begin{psmallmatrix}
\text{Id} &  0  \\
 0 & -1
  \end{psmallmatrix}$ or its negative $-A$, which is the case of $N^4_1$, $N^4_2$, and $N^4_{14}$, we have 
\begin{equation*}
\begin{split}
C_{\pi} & = \left\{ (x_1 \; x_2 \; x_3 \; x_4) \in \G(4, \R) \mid x_4 \perp x_i \; \text{with} \; i=1,2,3 \right\} \\
   & =  \ort(4) \cdot \left\{ \left( \begin{array}{cc}
B & 0 \\
0 & a
\end{array} \right) \mid a\in \R^* \; \text{and} \; B\in \G(3, \R) \right\} \\
  &  = \ort(4) \cdot ( \G(3, \R) \times \R^*).
\end{split}
\end{equation*}

\noindent Case 3. For cyclic holonomy with order bigger than 2 we use the property that if $A\in H_{\pi}$ and $X\in \G(n, \R)$ such that $ XAX^{-1} \in \ort(n)$ then $ XA^rX^{-1}=(XAX^{-1})^r \in \ort(n)$ for any $r\in \mathbb{N}$. \\
When $H_{\pi}$ is generated by matrices of the form $\begin{psmallmatrix}
\text{Id} & 0  \\ 
 0 & R(\theta)     \end{psmallmatrix}$, which is the case of $O^4_4$, $O^4_5$, $O^4_6$, $O^4_7$, and $O^4_8$, we get from equation \eqref{eqcone} that 
\begin{equation*}
\begin{split}
C_{\pi} & = \left\{ (x_1 \; x_2 \; x_3 \; x_4) \in \G(4, \R) \mid x_i \perp x_j \; \text{for} \; i=1,2 \; \text{and} \; j=3,4, \; \text{with} \; x_3 \perp x_4 \right. \\
  & \left. \; \text{and} \; \| x_3 \| = \| x_4 \| \right\} \\
      & = \ort(4) \cdot \left( \begin{array}{cc} \G(2, \R) & 0 \\
0 & \R^+ \times \ort(2) 
\end{array} \right) \\
         & = \ort(4) \cdot ( \G(2, \R) \times (\R^+ \times \ort(2))).
\end{split}
\end{equation*}
When the holonomy is generated by matrices of the form $\begin{psmallmatrix}
-E_0 & 0  \\ 
0 & R(\theta)     \end{psmallmatrix}$ or  $\begin{psmallmatrix}
R(\theta) & 0  \\ 
0 & -E_0       \end{psmallmatrix}$, which is the case of $N^4_{15}$, $N^4_{16}$, $N^4_{18}$, $N^4_{19}$, and $N^4_{20}$, we have 
\begin{equation*}
\begin{split}
C_{\pi} & = \left\{ (x_1 \; x_2 \; x_3 \; x_4) \in \G(4, \R) \mid x_i \perp x_j \; \text{for all} \; i \neq j \; \text{with} \; i,j \in \{1,2,3,4\}, \right. \\
  & \left. \; \text{and} \; \| x_3 \| = \| x_4 \| \right\} \\
   &  =  \ort(4) \cdot \left( ((\R^+)^2 \times \ort(2)) \times (\R^+ \times \ort(2)) \right).
\end{split}
\end{equation*}
When the holonomy is generated by $\begin{psmallmatrix}
0 & 1 & 0  \\
1 & 0 & 0  \\ 
0 & 0 & R(\theta)     \end{psmallmatrix}$, as in $N^4_{17}$, we have
\begin{equation*}
\begin{split}
C_{\pi} & = \left\{ (x_1 \; x_2 \; x_3 \; x_4) \in \G(4, \R) \mid x_i \perp x_j \; \text{for} \; i=1,2 \; \text{and} \; j=3,4, \; \text{with} \; \| x_1 \| = \| x_2 \|  \; \right. \\
  & \left. \; x_3 \perp x_4, \; \text{and} \; \| x_3 \| = \| x_4 \|  \right\}.
\end{split}
\end{equation*}
Then the vectors $x_1$ and $x_2$ have the same length and the angle between them should be smaller than $\pi$. Thus $C_{\pi} =  \ort(4) \cdot ((\R^+ \times (0, \pi) \times \ort(2) ) \times (\R^+ \times \ort(2))). $ \\  
When the holonomy is generated by  $\begin{psmallmatrix}
1 & 0 & 0  & 0 \\ 
0 & 0 & -1  & 0 \\
0 & 0 & 0 &  -1  \\
0 & -1 & 0 & 0
  \end{psmallmatrix}$, which is the case of $N^4_{21}$, we have
\begin{equation*}
\begin{split}
C_{\pi} & =  \left\{ (x_1 \; x_2 \; x_3 \; x_4) \in \G(4, \R) \mid x_1 \perp x_i  \; \text{with} \; i= 2,3,4, \right. \\ 
 &  \left.  \|x_2\|=\|x_3\|=\|x_4\| \; \text{and} \; x_2\cdot x_3= x_2 \cdot x_4 =x_3 \cdot x_4 \right\} . \\
\end{split}
\end{equation*}  
This means that the vectors $x_2$, $x_3$ and $x_4$ have the same length and form the same angle between them. For this situation we have that the angle is $\theta \in (0, \frac{2\pi}{3})$ since having angle $\frac{2\pi}{3}$ means that the vectors are coplanar (and not linearly independent anymore). With this information we can conclude that
\[
C_{\pi} =  \ort(4) \cdot (\R^* \times (\R^+ \times (0, {\scriptstyle \frac{2\pi}{3}}) \times \ort(3)).
\]

\end{proof}

\subsection{The matrix part of the normalizer}
The description of the normalizer depends not only on the holonomy but also on the affine structure as well, i.e., on how the translations are acting. To get easier computations for some cases we change the representation by conjugating with a suitable affine transformation. These cases are: $G_3$, $G_5$, where the representation is changed as in \cite[Lemma 2.2]{kang}; $O^4_4$, $O^4_5$, $O^4_8$, $N^4_{16}$, $N^4_{18}$, $N^4_{19}$ and $N^4_{20}$, where the representation is changed as in Lemma \ref{integerrep}. We observe that if $\pi'=\xi \pi \xi^{-1}$ where $\xi \in \A(n)$ and $\pi$ a Bieberbach group, we have that the normalizer behaves as follows $\text{N}_{\A(n)} ( \pi' ) = \xi \text{N}_{\A(n)} (\pi) \xi^{-1}$ (\protect{\cite[page 1069]{kang}). 

By \eqref{exact seq}, we always have a lattice $L_{\pi}$ inside our Bieberbach group $\pi$, and $\tau (\text{N}_{\A(n)} (L_{\pi}))$ is $\G(n, \Z)$ or a conjugate of $\G(n, \Z)$ by the matrix of change of coordinates when the lattice $L_{\pi}$ is not $\Z^n$. To keep our notation simple, we will assume that the lattice is the standard one for the next explanation. Then we have $\mathcal{N}_{\pi} \subseteq  $N$_{\G(n, \Z)}(H_{\pi})$. We may have the following two situations: $\mathcal{N}_{\pi}$ is not always N$_{\G(n, \Z)}(H_{\pi})$ and the normalizer is not always a semidirect product. Having the following property of $\pi$ will make $\mathcal{N}_{\pi}$ easier to describe.

\begin{definition}
Let $\pi$ be a Bieberbach group with non trivial holonomy. We say that the group has {\bf translation part not involved}, when for $X\in \text{N}_{\G(n,\Z)} (H_{\pi})$ we have that $X(v)=v=(v_1, \dots, v_n)$ or $X(v)=(u_1, \dots, u_n)$, with $u_i= -v_i$ for some $i \in I \subseteq \{1, \dots, n\}$ and $u_j=v_j$ for $j \notin I$; for each generator $\alpha=(A,v)$ of $\pi$ such that $A \neq \Id$. Otherwise, we say it has {\bf translation part  involved}.
\end{definition}

In the Bieberbach groups we are studying, having translation part not involved, standard lattice and for each generator $\alpha=(A,v)$ we have $(A, -v) \in \pi$, then $\mathcal{N}_{\pi} =  \text{N}_{\G(n, \Z)}(H_{\pi}).$ When we do not have the properties mentioned before, which is most of the cases, we can still see if the normalizer has a structure of semidirect product using the following lemma.   




\begin{lemma} \label{semipro}
Let $G$ be a subgroup of $\A(n)$. For all $(X,u) \in G$, we have $(X,0) \in G$ if and only if $G= M \ltimes T$, where $M$ is the matrix part and $T$ are the translations of $G$.   
\end{lemma}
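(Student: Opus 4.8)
The plan is to treat $G = M \ltimes T$ as an \emph{internal} semidirect product and reduce the whole equivalence to a single factorization statement. I would write elements of $\A(n)$ multiplicatively using the rule $(X,u)(Y,v) = (XY, Xv + u)$, so that $(X,u)^{-1} = (X^{-1}, -X^{-1}u)$. I set $T = G \cap (\{\Id\}\times \R^n) = \{(\Id, v) \in G\}$ for the translations of $G$, and I realize the matrix part inside $G$ as the subgroup $M = \{(X,0) : (X,0)\in G\}$ (which, under the hypothesis, projects onto all of $\tau(G)$). Both are genuine subgroups of $G$, and their intersection is trivial: an element of $M \cap T$ has matrix part $\Id$ and translation part $0$, hence is the identity.

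First I would record the two facts that hold for \emph{every} subgroup $G$, with no hypothesis. The subgroup $T$ is normal in $G$: for $(X,u)\in G$ and $(\Id, v)\in T$ a direct computation gives
\[
(X,u)(\Id,v)(X,u)^{-1} = (\Id, Xv),
\]
which lies in $G$ (being a product of elements of $G$) and is a translation, hence lies in $T$. Together with $M \cap T = \{e\}$, this shows that the assertion $G = M \ltimes T$ is equivalent to the single condition $G = M\cdot T$. Thus the entire lemma reduces to proving that $G = M\cdot T$ if and only if $(X,0)\in G$ whenever $(X,u)\in G$.

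For the forward direction I would start from an arbitrary $(X,u)\in G$ and use the hypothesis to obtain $(X,0)\in G$, that is, $(X,0)\in M$. Then
\[
(X,0)^{-1}(X,u) = (\Id, X^{-1}u) \in G
\]
is a translation, hence lies in $T$, and so $(X,u) = (X,0)\,(\Id, X^{-1}u) \in M\cdot T$; this gives $G\subseteq M\cdot T$, while $M\cdot T\subseteq G$ is immediate. For the converse I would assume $G = M\cdot T$ and write a given $(X,u) = (Y,0)(\Id,v) = (Y, Yv)$ with $(Y,0)\in M$ and $(\Id,v)\in T$; comparing matrix parts forces $Y = X$, so that $(X,0)=(Y,0)\in G$.

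The only genuinely delicate point is the bookkeeping in the semidirect-product multiplication, arranged so that the hypothesis is seen to be \emph{exactly} what is needed to split off the translation factor $(\Id, X^{-1}u)$ from a general element $(X,u)$. By contrast, the normality of $T$ and the triviality of $M\cap T$ carry no content and hold unconditionally, which is why the whole statement collapses to the factorization $G = M\cdot T$.
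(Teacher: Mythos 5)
Your proof is correct, and it is genuinely more explicit than what the paper provides: the paper offers no computation at all, remarking only that one "can use the splitting theorem," whereas you verify everything directly from the multiplication law $(X,u)(Y,v)=(XY,Xv+u)$ of $\A(n)$. Your unconditional observations (normality of $T$ and triviality of $M\cap T$) correctly reduce the assertion $G=M\ltimes T$ to the single factorization $G=M\cdot T$, and the identity $(X,u)=(X,0)(\Id,X^{-1}u)$ is exactly the point where the hypothesis enters. One aspect of your write-up deserves emphasis: your decision to realize $M$ as the subgroup $\{(X,0):(X,0)\in G\}$ of $G$ itself, rather than as the abstract image $\tau(G)$, is not merely a convenience but is what makes the "only if" direction true. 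If $G=M\ltimes T$ were read as the splitting of the extension $1\to T\to G\to\tau(G)\to 1$ by \emph{some} section, the implication would fail: the cyclic subgroup of $\A(1)$ generated by $x\mapsto 2x+1$ has trivial translation subgroup and splits trivially over it, yet contains no element of the form $(X,0)$ with $X\neq\Id$. Your internal-semidirect-product reading is the one the paper actually needs when it asks whether $(X,0)$ lies in $\text{N}_{\A(n)}(\pi)$ for every $X\in\mathcal{N}_{\pi}$, so your argument is both complete and correctly calibrated to the lemma's intended use.
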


Since $G$ has the product from $\A(n)$, the previous lemma is actually telling us when $G$ can be split into a product $M\times T$. Then, for proving the lemma one can use the splitting theorem.    


In general, we have to look for matrices in  $\text{N}_{\G(n,\Z)} (H_{\pi})$ which preserve the translations of any generator $\alpha=(A,v) \in \pi$ with $A \neq \Id$, i.e., all the possible options for a vector $u\in \R^n$ such that  $(A,u) \in \pi$.  


We proceed with the description of $\mathcal{N}_{\pi}$ for the 3-dimensional Bieberbach groups.

\begin{proposition}[\protect{\cite{kang}}] \label{normalizerdim3}
Let $\pi$ be one of the Bieberbach groups for the 3-dimensional closed flat manifolds, then the matrix part of the normalizer of $\pi$, $\mathcal{N}_{\pi}$, in $\A(3)$ are as follows:
\vspace{-0.5cm}
\begin{multicols}{2}
\renewcommand{\labelenumi}{\arabic{enumi}.}
\begin{enumerate}
\item For $T^3$, $\mathcal{N}_{\pi}= \G(3, \Z)$.
\item For $G_2$, \\
$\mathcal{N}_{\pi}= \left\{ \begin{psmallmatrix}
\pm 1 & 0  \\ 
0 & B     \end{psmallmatrix} \mid B\in \G(2, \Z) \right\}$. 
\item For $G_3$, \\
$\mathcal{N}_{\pi}= D_6 = \left\langle \begin{psmallmatrix}
1 & 0   \\ 
0 & R(\frac{\uppi}{3})   \end{psmallmatrix}, \begin{psmallmatrix}
-1 & 0    \\ 
0  & E_0   \end{psmallmatrix} \right\rangle $.
\item For $G_4$, \\
$\mathcal{N}_{\pi}= D_4 = \left\langle  \begin{psmallmatrix}
1 & 0   \\ 
0 & R(\frac{\uppi}{2})     \end{psmallmatrix}, \begin{psmallmatrix}
-1 & 0    \\ 
0  & E_0   \end{psmallmatrix} \right\rangle $.
\item For $G_5$, \\
$\mathcal{N}_{\pi}= D_6 =  \left\langle \begin{psmallmatrix}
1 & 0   \\ 
0 & R(\frac{\uppi}{3})   \end{psmallmatrix}, \begin{psmallmatrix}
R(\uppi) & 0 \\
 0       & 1     \end{psmallmatrix} \right\rangle $.  

\columnbreak

\item For $G_6$, $\mathcal{N}_{\pi}= \left\{ \begin{psmallmatrix}
\pm 1 & 0 & 0 \\ 
0 & \pm 1 & 0 \\    
0 &  0 & \pm 1 \\   \end{psmallmatrix} \rtimes \mathcal{S}_3  \right\},$
where $\mathcal{S}_3$ is the  permutation group of 3 letters.
\item For $B_1$,  $\mathcal{N}_{\pi}= \left\{ \begin{psmallmatrix}
\Gamma_{0}(2) & 0  \\
 0 & \pm 1 \\    \end{psmallmatrix} \right\}$.
\item For $B_2$, \\
$\mathcal{N}_{\pi}= \left\{ \begin{psmallmatrix}
\Gamma(2) & 0  \\
 0 & \pm 1 \\    \end{psmallmatrix} \cdot \left\langle \begin{psmallmatrix}
0 & 1 & 0  \\
1 & 0 & 0  \\
 0 & 0 & -1 \\    \end{psmallmatrix} \right\rangle  \right\}$.
\item For $B_3$ and $B_4$, \\
$\mathcal{N}_{\pi}= \left\{ \begin{psmallmatrix}
\pm 1 & 0 & 0 \\ 
0 & \pm 1 & 0 \\    
0 &  0 & \pm 1 \\   \end{psmallmatrix} \right\}$.   
\end{enumerate}
\end{multicols}
\end{proposition}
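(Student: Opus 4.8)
The plan is to compute $\mathcal{N}_{\pi}=\tau(\text{N}_{\A(3)}(\pi))$ for each of the ten groups by a uniform two-stage argument, and only at the end to read off the structural descriptions. The starting point is the containment recorded before the statement: if $(S,w)\in\text{N}_{\A(3)}(\pi)$ then $S$ must send the lattice $L_{\pi}$ to itself and conjugate the holonomy into itself, so after normalising $L_{\pi}$ to be $\Z^{3}$ one has $\mathcal{N}_{\pi}\subseteq N:=\text{N}_{\G(3,\Z)}(H_{\pi})$. Thus it suffices to (i) compute the integral normaliser $N$ of the holonomy, and (ii) decide, for each $S\in N$, whether $S$ is the matrix part of some affine normaliser of $\pi$.

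For step (i) I would use the invariant-subspace structure of the holonomy generators. Writing each generator in its normal form $\text{diag}(\pm1,R(\theta))$, $\text{diag}(\Id,\pm1)$ or a signed permutation, a matrix $S\in\G(3,\Z)$ lies in $N$ precisely when it preserves the invariant subspaces of $H_{\pi}$ and permutes the remaining ones compatibly. This yields at once the block shapes $\begin{psmallmatrix}\pm1&0\\0&B\end{psmallmatrix}$ and $\begin{psmallmatrix}B&0\\0&\pm1\end{psmallmatrix}$ with $B\in\G(2,\Z)$ (cases $G_{2},B_{1}$), the full signed-permutation group for the diagonal $\Z_{2}\times\Z_{2}$ holonomy of $G_{6}$, and, after passing to the hexagonal lattice as in \cite{kang} for $G_{3}$ and $G_{5}$ (the rotation $R(\pi/2)$ of $G_{4}$ is already integral), the product of the axial sign group with the planar dihedral symmetry group of the relevant lattice.

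Step (ii) is the crux and is where the affine data enters. Conjugating a generator $\alpha=(A,v)$ with $A\ne\Id$ by $(S,w)$ gives translation part $Sv+(\Id-A)w$, so $S$ survives in $\mathcal{N}_{\pi}$ exactly when there is a $w$ with $Sv+(\Id-A)w$ congruent modulo $L_{\pi}$ to the translation of the element of $\pi$ whose matrix is $SAS^{-1}$. Because $\Id-A$ acts invertibly on the non-fixed eigenspace, those coordinates can be adjusted at will, and the condition collapses to a congruence on the coordinates fixed by $A$. When $v$ is a half-translation this congruence is insensitive to sign (as $\tfrac12\equiv-\tfrac12$), so no restriction survives and $\mathcal{N}_{\pi}=N$; this disposes of $T^{3}$, $G_{2}$ and $G_{6}$, where in the last case the freedom in $(\Id-A)w$ also absorbs the permuted half-translations. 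When instead $v$ involves $\tfrac13,\tfrac14$ or $\tfrac16$ the congruence is sign- and rotation-sensitive and cuts $N$ down to the diagonally embedded dihedral subgroup $D_{4}$ or $D_{6}$ ($G_{3},G_{4},G_{5}$). For $B_{1}$ the demand that $S\cdot\tfrac12 e_{1}$ preserve $\tfrac12 e_{1}$ modulo $\Z^{3}$ forces the lower-left entry of $B$ to be even, producing $\Gamma_{0}(2)$; and for $B_{3},B_{4}$ the fixed line $\R e_{1}$ is rigid, so the congruence forbids the transposition of $e_{2}$ and $e_{3}$ and leaves only the diagonal sign matrices. Lemma \ref{semipro} is then used case by case to decide whether $\text{N}_{\A(3)}(\pi)$ splits as a semidirect product and to present $\mathcal{N}_{\pi}$ in the stated form.

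The hard part will be $B_{2}$, the single case with a non-standard lattice: here $L_{\pi}$ is generated by $e_{1},e_{2}$ and the non-lattice translation $s=(\Id,\tfrac12(e_1+e_2)+e_3)$, so before running step (ii) I would re-choose a basis adapting the computation to this lattice. The presence of the $\tfrac12(e_1+e_2)$-part of $s$ tightens the congruence on $B$ from $\Gamma_{0}(2)$ to $\Gamma(2)$, while the symmetry of $L_{\pi}$ under interchanging $e_{1}$ and $e_{2}$ (which fixes $s$ modulo $L_{\pi}$) produces the extra monomial generator $\begin{psmallmatrix}0&1&0\\1&0&0\\0&0&-1\end{psmallmatrix}$; checking that this normaliser does not split as a plain product, and that no further matrices are admitted, is the delicate verification that Lemma \ref{semipro} is designed to handle.
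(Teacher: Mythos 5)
Your two-stage method --- first computing $\text{N}_{\G(3,\Z)}(H_\pi)$ from the eigenspace structure of the holonomy generators, then cutting it down by requiring that $Sv+(\Id-SAS^{-1})w$ land in the translation coset of the element of $\pi$ with matrix part $SAS^{-1}$ --- is exactly the framework the paper sets up in its discussion of the matrix part of the normalizer, and your $B_1$ computation (preservation of the coset $(\tfrac{2n_1+1}{2},n_2)$ forcing $\Gamma_0(2)$) coincides with the only case the paper actually writes out, the remaining cases being cited from Kang. The proposal is correct and takes essentially the same approach, modulo sketch-level details left unverified (e.g.\ the simultaneous solvability in $w$ for the two-generator holonomies $G_6$, $B_3$, $B_4$, and the full bookkeeping for $B_2$, which you rightly flag as the delicate case).
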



Although the above result was proved in \cite[Lemma 3.3]{kang}, we point out and correct a mistake in the cited reference while calculating $\mathcal{N}_{\pi}$ for the group $B_1$. The group $B_1$ has standard lattice and the normalizer has structure of semidirect product. Now, we have to be careful with the translation part of the generator $\epsilon$; this means we have to restrict to matrices in N$_{\G(3, \Z)}(H_{\pi})$ that preserve the corresponding lattice of the generator $\epsilon$:
$$ \epsilon L_{\pi} = \{ (E, v) \mid v= \frac{2n_1 + 1}{2} e_1 + n_2e_2  + n_3 e_3, \; \text{with} \; n_1, n_2, n_3 \in \Z\} . $$ 
Then we look for $X \in \G( 2, \Z)$ such that $X(\frac{2n_1 +1}{2}, n_2)^t=(\frac{2k_1 +1}{2}, k_2)$, with $n_i$, $k_i$ $\in \Z$ for $i=1,2$. This only happens for matrices in $\Gamma_0(2)$, getting the conclusion. 

We continue computing the matrix part of the normalizer for the 4-dimensional  closed flat manifolds with one generator in their holonomy. We analyze separately the orientable and the non-orientable manifolds. 

Let us introduce the following notation, which we will use in the coming two propositions.
Again we consider 
\[
X= \begin{psmallmatrix}
a & b \\ c & d \end{psmallmatrix} \quad  \text{or} \quad  \begin{psmallmatrix}
a & b & c  \\ 
d & e & f  \\
g & h & i  \end{psmallmatrix} \quad \text{depending on the dimension}.
\]

$\begin{aligned} 
\Gamma_{0,1}(3)& = \left\{ X \in \G(2, \Z) \mid b \equiv 0 \; \text{and} \; d \equiv 1  \; \text{mod} \; 3 \right\} , \\
\Gamma_{0,2}(3) & = \left\{ X \in \G(2, \Z) \mid b \equiv 0 \; \text{and} \; d \equiv 2  \; \text{mod} \; 3 \right\} , \\
\Gamma_{1,2}(3)  & = \left\{ X \in \G(2, \Z) \mid b,c \equiv 0, \;  a\equiv 1 \; \text{and} \; d \equiv 2  \; \text{mod} \; 3 \right\}, \\
\Gamma_{2,1}(3) & = \left\{ X \in \G(2, \Z) \mid b,c \equiv 0, \;  a\equiv 2 \; \text{and} \; d \equiv 1  \; \text{mod} \; 3 \right\}, \\
\Gamma_{0,1}(4) & = \left\{ X \in \G(2, \Z) \mid b \equiv 0 \; \text{and} \; d \equiv 1  \; \text{mod} \; 4 \right\} , \\
\Gamma_{0,3}(4) & = \left\{ X \in \G(2, \Z) \mid b \equiv 0 \; \text{and} \; d \equiv 3  \; \text{mod} \; 4 \right\} , \\
\Gamma_{0,1}(2,4)  & = \left\{ X \in \G(2, \Z) \mid c \equiv 0 \; \text{mod} \; 2, \; b \equiv 0 \; \text{and} \; d \equiv 1  \; \text{mod} \; 4 \right\} , \\
\Gamma_{0,3}(2,4) & =  \left\{ X \in \G(2, \Z) \mid  c \equiv 0 \; \text{mod}\; 2, \; b \equiv 0 \; \text{and} \; d \equiv 3  \; \text{mod} \; 4 \right\} , \\
\widehat{\Gamma}(2)_3 & = \left\{ X \in \G(3, \Z) \mid d,g,c,f \equiv 0 \; \text{mod} \; 2 \right\}.
\end{aligned}$

\begin{proposition} \label{normalizadordim4or}
The matrix part of the normalizer of $\pi$ in $\A(4)$ for the 4-dimensional orientable closed flat manifolds with a single generator in their holonomy is as follows:
\renewcommand{\labelenumi}{\arabic{enumi}.}
\begin{enumerate}
\item For $T^4$, $\mathcal{N}_{\pi}= \G(4, \Z)$.
\item For $O^4_2$, $\mathcal{N}_{\pi} = \left\{ \begin{psmallmatrix} \G(2, \Z) & 0 \\
0 & \Gamma_0(2)
\end{psmallmatrix} \right\}$.
\item For $O^4_3$, $\mathcal{N}_{\pi} = \left\{ \begin{psmallmatrix}
 \Gamma(2) & 0   \\
0 & \Gamma_0(2)^t
  \end{psmallmatrix} \cdot \left\langle  \begin{psmallmatrix}
1 & 1 & 0   \\ 
0 & 1 & 0   \\
0 & 0 & \text{Id}  
  \end{psmallmatrix} \right\rangle \right\}$.
\item For $O^4_4$, $\mathcal{N}_{\pi}= \left\langle 
\begin{psmallmatrix} B & 0  \\
0 &  R(\frac{\uppi}{3})   \\
\end{psmallmatrix}
, \begin{psmallmatrix} C & 0  \\
0 & E_0
\end{psmallmatrix} \mid  B \in \Gamma_{0,1}(3), C\in \Gamma_{0,2}(3) \right\rangle $.
\item For $O^4_5$, $\mathcal{N}_{\pi}= \left\langle 
\begin{psmallmatrix} B & 0  \\
0 &  R(\frac{\uppi}{3})   \\
\end{psmallmatrix}
, \begin{psmallmatrix} C & 0  \\
0 & E_0
\end{psmallmatrix} \mid  B \in \Gamma_{1,2}(3), C\in \Gamma_{2,1}(3) \right\rangle $.
\item For $O^4_6$, $ \mathcal{N}_{\pi}= \left\langle \begin{psmallmatrix}
B & 0   \\
0 & R(\frac{3\uppi}{2})
  \end{psmallmatrix}, \begin{psmallmatrix}
C & 0  & 0 \\
0 & 0 & 1  \\
0 & 1 & 0
  \end{psmallmatrix}  \mid B \in  \Gamma_{0,1}(4), C\in  \Gamma_{0,3}(4)   \right\rangle$.
\item For $O^4_7$, \\
$\mathcal{N}_{\pi} =\left\{ \left\langle \begin{psmallmatrix}
B & 0   \\
0 & R(\frac{3\uppi}{2})
  \end{psmallmatrix}, \begin{psmallmatrix}
C & 0  & 0 \\
0 & 0 & 1  \\
0 & 1 & 0
  \end{psmallmatrix} \mid B \in  \Gamma_{0,1}(2,4), C \in  \Gamma_{0,3}(2,4) \right\rangle \right\} \cdot \left\langle \begin{psmallmatrix}
1 & 2 & 0  \\
0 & 1 & 0  \\
0 & 0 & E_0
\end{psmallmatrix} \right\rangle $.
\item For $O^4_8$, $\mathcal{N}_{\pi} = \left\langle \begin{psmallmatrix}
B & 0   \\
0 & R(\frac{\uppi}{3}) \\
  \end{psmallmatrix}, \begin{psmallmatrix}
C & 0   \\
0 & -E_0
  \end{psmallmatrix} \mid B \in \Gamma_{0,1}(6), C \in \Gamma_{0,5}(6)  \right\rangle  $.
\end{enumerate}
\end{proposition}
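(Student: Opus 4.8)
The plan is to compute $\mathcal{N}_{\pi}=\tau(\mathrm{N}_{\A(4)}(\pi))$ directly from the presentations in Theorem \ref{clasif4}, working through the eight orientable groups one at a time. The first reduction is the containment $\mathcal{N}_{\pi}\subseteq \mathrm{N}_{\G(4,\Z)}(H_{\pi})$ coming from the exact sequence \eqref{exact seq}: any $\gamma=(X,u)$ normalizing $\pi$ must normalize its translation lattice $L_{\pi}$, so $X$ is integral (after conjugating by the change-of-basis matrix when $L_{\pi}\neq\Z^4$) and must normalize the holonomy. For $T^4$ the holonomy is trivial and $\mathcal{N}_{\pi}=\G(4,\Z)$ at once. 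For $O^4_4$, $O^4_5$ and $O^4_8$ I first replace the representation by its integral conjugate from Lemma \ref{integerrep}, using that $\mathrm{N}_{\A(4)}(\xi\pi\xi^{-1})=\xi\,\mathrm{N}_{\A(4)}(\pi)\,\xi^{-1}$, so the answer transfers back verbatim. The computation then splits into two independent tasks for each group: determine $\mathrm{N}_{\G(4,\Z)}(H_{\pi})$, and then cut it down to those $X$ admitting a translation $u$ with $(X,u)\in\mathrm{N}_{\A(4)}(\pi)$.

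For the first task I exploit the $A$-invariant orthogonal splitting $\R^4=V_{\mathrm{fix}}\oplus V_{\mathrm{rot}}$ into the fixed subspace of the generator $A$ and the plane it rotates. Since $H_{\pi}=\langle A\rangle$ is cyclic, a matrix $X$ normalizes it iff $XAX^{-1}=A^{k}$ with $A^k$ similar to $A$; in every case here this forces $k\equiv\pm1\pmod m$ (where $m$ is the order of the holonomy) and forces $X$ to preserve the splitting. Thus $\mathrm{N}_{\G(4,\Z)}(H_{\pi})$ consists of block matrices whose $V_{\mathrm{fix}}$-block $B$ is an arbitrary element of $\G(\dim V_{\mathrm{fix}},\Z)$ and whose $V_{\mathrm{rot}}$-block lies in the finite normalizer of the integral rotation: the case $XAX^{-1}=A$ contributes a rotation-type block (written $R(\tfrac{\uppi}{3})$, $R(\tfrac{3\uppi}{2})$, etc., in the abused notation), and the case $XAX^{-1}=A^{-1}$ contributes a reflection-type block ($E_0$, $-E_0$). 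This is the source of the two families of generators appearing in each item of the statement.

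For the second task I impose the lifting condition through the conjugation identity
\[
(X,u)(A,v)(X,u)^{-1}=\bigl(XAX^{-1},\,Xv+(\Id-XAX^{-1})u\bigr),
\]
which must land in $\pi$ for every generator $\alpha=(A,v)$. The key observation is that the translation $v$ of each holonomy generator lies in $V_{\mathrm{fix}}$, where $\Id-XAX^{-1}$ vanishes; hence the free parameter $u$ disappears from the $V_{\mathrm{fix}}$-component and membership collapses to a pure congruence $Xv\equiv v \pmod{L_{\pi}}$ when $XAX^{-1}=A$, and $Xv\equiv -v \pmod{L_{\pi}}$ when $XAX^{-1}=A^{-1}$. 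Reading these off the relevant column of $B$ modulo $m$ produces exactly the arithmetic subgroups $\Gamma_{0,1}(m)$, $\Gamma_{0,m-1}(m)$ and their refinements. Whether the resulting set is a genuine semidirect product or requires the extra juxtaposed generators is then settled by Lemma \ref{semipro}, by checking for each candidate $X$ whether $u=0$ already lifts it.

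The main obstacle, and the bulk of the work, lies in the groups whose lattice is refined by a glide generator, namely $O^4_3$, $O^4_5$ and $O^4_7$. There $L_{\pi}$ is a proper overlattice of $\Z^4$ (a conjugate of the standard lattice), so normalizing $L_{\pi}$ is strictly stronger than being integral and the congruence $Xv\equiv v$ must be read modulo this finer lattice; moreover the glide generators $s, s_1, s_2$ must themselves be carried into $\pi$, imposing further mod-$2$ conditions. Tracking how $X$ permutes the half- and third-integer cosets of $L_{\pi}$ is what produces the refined subgroups such as $\Gamma(2)$ and $\Gamma_{0,1}(2,4)$, together with the extra generators $\begin{psmallmatrix}1&1&0\\0&1&0\\0&0&\Id\end{psmallmatrix}$ and $\begin{psmallmatrix}1&2&0\\0&1&0\\0&0&E_0\end{psmallmatrix}$, which lift only with a prescribed nonzero $u$ and therefore survive as separate factors by Lemma \ref{semipro}. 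Pinning down both the exact congruence subgroup and the precise failure of splitting in each of these cases is the delicate part, whereas the remaining groups with standard lattice and $(A,-v)\in\pi$ reduce immediately to $\mathrm{N}_{\G(4,\Z)}(H_{\pi})$ by the translation-part-not-involved criterion.
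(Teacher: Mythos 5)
Your proposal is correct and follows essentially the same route as the paper: reduce to $\mathrm{N}_{\G(4,\Z)}(H_{\pi})$ (conjugated when $L_{\pi}$ is non-standard), obtain the block form from normalizing the cyclic holonomy, and then cut down by the condition that the translation parts of the generators (equivalently, their cosets of $L_{\pi}$) be preserved or switched, with $O^4_3$ and $O^4_7$ requiring the extra non-split generators. Your packaging of the last step via the conjugation identity and the observation that $v\in V_{\mathrm{fix}}$ eliminates the parameter $u$ is a slightly cleaner formulation of the congruence conditions the paper extracts by writing out the lattices $\alpha^k L_{\pi}$ explicitly, but it is the same computation.
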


\begin{proof}
In the case of T$^4$, the result follows from Corollary of Theorem 1 in \cite{wolf2}. Therefore we exclude the torus from our analysis. 

The lattices of the Bieberbach groups $O^4_3$, $O^4_5$, and $O^4_7$ are not the standard ones. Then the group of matrices that normalizes the lattice is conjugate to $\G(4, \Z)$ by a matrix $Q \in \G(4, \R)$. For these cases the $Q$ is computed, but fortunately  the $X\in Q\G(4, \Z)Q^{-1}$ that satisfy the condition $XA=AX$ for the generator of the holonomy $A$ are reduced to matrices in $\G(4, \Z)$. Then in all cases we can consider matrices in $\G(4, \Z)$.

We first find all the matrices $X \in \G(4, \Z)$ that normalize the holonomy $H_{\pi}$. For all cases we get that the matrix must have the form  
$$X = \left( \begin{array}{cc} X_1 & 0 \\
0 & X_2 
\end{array} \right), \quad \text{with} \quad X_1, \; X_2 \in \G(2, \Z).$$

It turns out that the translation part is involved for all the cases. Then the lattices of the generators of the holonomy have to be computed and we have to search for matrices $X\in \text{N}_{\G(4, \Z)}(H_{\pi})$ that preserve or switch the lattices.

For $O^4_2$, the matrix $X_2 \in \G(2, \Z)$ must preserve  vectors of the form \\ 
$X_2(n_3, \frac{2n_4+1}{2})^t= (k_3, \frac{2k_4+1}{2}) $, with $n_i$, $k_i \in \Z$ for $i=3,4$, similar to the case of $B_1$.

For cyclic holonomy of order $k$ bigger than 2, we have the cases
$$ XAX^{-1}= \left\{ \begin{array}{cc}
A &    \\
A^r & \text{with} \; r\in \mathbb{N}, 1 < r < k \; \text{and} \; (r, k)=1 .
\end{array} \right. 
$$
For the ones with standard lattice, $O^4_4$, $O^4_6$ and $O^4_8$, we look for the matrices $X_1$  such that $X_1(n_1, \frac{kn_2 + 1}{k})= (k_1, \frac{kk_2 + 1}{k})$ or $X_1(n_1, \frac{kn_2 + 1}{k})= (k_1, \frac{kk_2 + r}{k})$, with $n_i$, $k_i \in \Z$ for $i=1,2$, depending on if we are fixing the generator $A$ or switching it to the generator $A^r$.  
The matrices $X_2$ are the same as in the cases of dimension 3 with their respective holonomy.     

We have more cases for the ones with non-standard lattice. Let us see this more closely:

For $O^4_5$ the lattices of the generators are:

\smallskip
$\begin{aligned}
\alpha L_{\pi} = \{ (A, v) \mid v= \frac{3n_1 +1}{3} e_1 + \frac{3n_2 - n_3 + n_4}{3}e_2 +(n_3+ n_4){\scriptstyle \frac{2}{\sqrt{3}}}e_3  + n_4{\scriptstyle \frac{2}{\sqrt{3}}} e_4 \; \\
  \text{and} \; n_i \in \Z, i=1,2,3,4 \}  
\end{aligned}$

$\begin{aligned} 
\alpha^2 L_{\pi} =  \{ (A^2, v) \mid v= \frac{3n_1 +2}{3} e_1 + \frac{3n_2 - n_3 + n_4}{3}e_2 -n_4{\scriptstyle \frac{2}{\sqrt{3}}}e_3  + n_3{\scriptstyle \frac{2}{\sqrt{3}}} e_4 \; \text{and} \\
\; n_i \in \Z, i=1,2,3,4 \}.  
\end{aligned}$\\
\noindent We have the next three options:\\
1. $-n_3 + n_4 \in 3\Z$, \hspace{0.5cm} 2. $-n_3 + n_4 \in 3\Z+1$,\hspace{0.5cm} 3. $-n_3 + n_4 \in 3\Z+2$.\\
\noindent Looking at all combinations for sending the lattices, it is concluded that not all of them are possible, leading us to get the structure of semidirect product in the normalizer.

$O^4_3$ and $O^4_7$ are the only ones whose normalizer do not accept a structure of semidirect product. We consider each case separately.

For $O^4_3$, the lattice of the generator is:  
\begin{equation*} 
\begin{split}
\alpha L_{\pi} = \{ (A, v) \mid v= \frac{2n_1 + n_4}{2} e_1 + \frac{2n_2 + 1}{2} e_2 - n_3e_3  -\frac{n_4}{2} e_4 \; \text{and} \; n_i \in \Z, \\ i=1,2,3,4 \}.  
\end{split}
\end{equation*}
We have two cases:  $n_4$ odd or $n_4$ even. Looking at all possibilities for the translations of the generators, we obtain:
$$ \text{N}_{\A(4)}(O^4_3) = \left( \left\{ \left( \begin{array}{cc}
 \Gamma(2) & 0   \\
0 & \Gamma_0(2)^t
  \end{array} \right) \right\} \ltimes \R \oplus \R \oplus {\scriptstyle \frac{1}{2}} \Z \oplus {\scriptstyle \frac{1}{2}} \Z \right) \cdot \langle \xi \rangle ,$$
  
\noindent where $ \xi = \left( \left( \begin{array}{cccc}
1 & 1 & 0  & 0 \\ 
0 & 1 & 0  & 0 \\
0 & 0 & 1 & 0  \\
0 & 0 & 0  & 1
  \end{array} \right) , \frac{1}{4} e_4 \right)  $.\\ 

\bigskip

For $O^4_7$, the lattices of the generators are as follows:
\begin{equation*}
\begin{split} 
\alpha L_{\pi} = \{ (A, v) \mid v= {\scriptstyle \frac{2n_1+n_3+n_4}{2}}e_1 +{\scriptstyle \frac{4n_2+2(n_3+n_4)+1}{4}}e_2 - n_4e_3 + n_3 e_4  \;  \text{and} \; n_i \in \Z,\\
 i=1,2,3,4 \}.  
\end{split}
\end{equation*}

\begin{equation*}
\begin{split} 
\alpha^3 L_{\pi} = \{ (A^3, v) \mid v= { \scriptstyle\frac{2n_1 + n_3 +n_4}{2}}e_1 +{\scriptstyle \frac{4n_2 +2n_3+2n_4+3}{4}}e_2 + n_4e_3  - n_3 e_4 \;  \text{and} \; n_i \in \Z, \\
 i=1,2,3,4 \}.  
\end{split}
\end{equation*}
We will have two cases: $ n_3 +n_4 $ even or  $ n_3 +n_4$ odd. Looking at all possibilities for the translations of the generators, we obtain:\\

\noindent $ \text{N}_{\A(4)}(O^4_7) =$

\begin{equation*}
\begin{split}
\left( \left\langle \left( \begin{array}{ccc}
\Gamma_{0,1}(2,4)  & 0  & 0 \\
   0               & 0  & 1  \\
   0               & -1 & 0
  \end{array} \right), \left( \begin{array}{ccc}
\Gamma_{0,3}(2,4)  & 0  & 0 \\
   0               & 0  & 1  \\
   0               & 1 & 0
  \end{array} \right)  \right\rangle  \ltimes \R \oplus \R \oplus T \right) \cdot \langle \xi \rangle ,
\end{split}  
\end{equation*} 

\noindent where \\ 
$\xi = \left( \left( \begin{array}{cccc}
1 & 2 & 0 & 0 \\
0 & 1 & 0 & 0 \\
0 & 0 & 1 & 0 \\
0 & 0 & 0 & -1  
\end{array} \right) , (0,0,\frac{1}{2}, -\frac{1}{2}) \right), \; \text{and} \quad T= \{ (t, n-t) \mid t\in {\scriptstyle \frac{1}{2}} \Z \; 
 \text{and} \; n\in \Z \} .$

\end{proof}

\begin{proposition} \label{normalizerdim4non-or}
The matrix part of the normalizer of $\pi$ in $\A(4)$ for the 4-dimensional non-orientable closed flat manifolds with a single generator in their holonomy is as follows:
\renewcommand{\labelenumi}{\arabic{enumi}.}
\begin{enumerate}
\item For $N^4_1$, $\mathcal{N}_{\pi}= \left\langle \begin{psmallmatrix}
B & 0  \\
0  & \pm 1
  \end{psmallmatrix} \mid B \in \Gamma_0(2)_3 \right\rangle $.
\item For $N^4_2$, $\mathcal{N}_{\pi} =\left\{ \left\langle \begin{psmallmatrix}
B & 0   \\
0 & \pm 1    \end{psmallmatrix} \mid B \in  \widehat{\Gamma}(2)_3  \right\rangle \right\} \cdot \left\langle \begin{psmallmatrix}
1 & 0 & 0 & 0 \\
0 & 1 & 0 & 0 \\
1 & 0 & 1 & 0 \\
0 & 0 & 0 & 1  
\end{psmallmatrix} \right\rangle $.
\item For $N^4_{14}$, $\mathcal{N}_{\pi} = \left\{ \begin{psmallmatrix} \G(3, \Z) & 0 \\
0 & \pm 1
\end{psmallmatrix} \right\}$.
\item For $N^4_{15}$, $\mathcal{N}_{\pi} =  \left\langle \begin{psmallmatrix}
\pm 1 & 0 & 0   \\ 
0 & 1 & 0   \\
0 & 0 & R(\frac{\uppi}{2})
  \end{psmallmatrix}, \begin{psmallmatrix}
\pm 1 & 0  & 0 \\ 
0 & -1 &  0 \\
0 & 0 & E_0
  \end{psmallmatrix} \right\rangle.$
\item For $N^4_{16}$, $\mathcal{N}_{\pi} =$
$  \left\langle \begin{psmallmatrix}
 R(\frac{\uppi}{2})  & 0 & 0 \\ 
 0 & \pm 1 & 0 \\
 0 & 0    &  1  \end{psmallmatrix}, \begin{psmallmatrix}
E_0 & 0 & 0 \\ 
0 & \pm 1 & 0 \\
0 &  0   & -1  \end{psmallmatrix} \right\rangle.$
\item For $N^4_{17}$, $\mathcal{N}_{\pi} =  \left\langle \begin{psmallmatrix}
B & 0    \\ 
0 & R(\frac{\uppi}{2})
  \end{psmallmatrix}, \begin{psmallmatrix}
 B & 0 \\ 
 0 & E_0
  \end{psmallmatrix} \mid B \in \left\{ \pm \text{Id}, \pm \begin{psmallmatrix}
0 & 1  \\ 
1 & 0  \\  \end{psmallmatrix} \right\} \right\rangle.$
\item For $N^4_{18}$, $\mathcal{N}_{\pi} =  \left\langle \begin{psmallmatrix}
\pm 1 & 0 & 0   \\ 
0 & \pm 1 & 0   \\
0 & 0 & R(\frac{\uppi}{2})
  \end{psmallmatrix}, \begin{psmallmatrix}
\pm 1 & 0  & 0 \\ 
0 & \pm 1 &  0 \\
0 & 0 & E_0
  \end{psmallmatrix} \right\rangle.$
\item For $N^4_{19}$, $\mathcal{N}_{\pi} = \left\langle \begin{psmallmatrix}
\pm 1 & 0 & 0  \\ 
0 & 1 & 0  \\
0 & 0 & R(\frac{\pi}{3}) \\
  \end{psmallmatrix}, \begin{psmallmatrix}
\pm 1 & 0 & 0   \\ 
0 & R(\uppi) & 0   \\
0 &     0    & 1
  \end{psmallmatrix} \right\rangle.$
\item For $N^4_{20}$, $\mathcal{N}_{\pi} =  \left\langle \begin{psmallmatrix}
\pm 1 & 0 & 0  \\ 
0 & 1 & 0  \\
0 & 0 & R(\frac{5\pi}{3}) \\
  \end{psmallmatrix}, \begin{psmallmatrix}
\pm 1 & 0 & 0  \\ 
0 & -1 & 0   \\
0 & 0 & -E_0
  \end{psmallmatrix} \right\rangle.$
\item For $N^4_{21}$, $\mathcal{N}_{\pi} =  \left\langle \begin{psmallmatrix}
1 & 0 & 0  & 0 \\ 
0 & 0 & -1  & 0 \\
0 & 0 & 0 & -1  \\
0 & -1 & 0  & 0
  \end{psmallmatrix},  \begin{psmallmatrix}
-1 & 0 & 0  & 0 \\ 
0 & 0 & 1  & 0 \\
0 & 1 & 0 & 0  \\
0 & 0 & 0  & 1
  \end{psmallmatrix} \right\rangle.$ 
 \end{enumerate}
\end{proposition}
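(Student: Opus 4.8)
The plan is to run, group by group, the same scheme already used in the proof of Proposition \ref{normalizadordim4or}: first arrange the lattice to be $\Z^4$ so that $\mathcal{N}_\pi\subseteq\text{N}_{\G(4,\Z)}(H_\pi)$; then solve $XAX^{-1}\in H_\pi$ for the holonomy generator; then cut down by the lattice cosets of the non-translational generators; and finally decide whether $\text{N}_{\A(4)}(\pi)$ splits as a semidirect product. For the reduction step I use the integral representations of Lemma \ref{integerrep} for $N^4_{16}$, $N^4_{18}$, $N^4_{19}$ and $N^4_{20}$, whose lattice is then standard. For $N^4_2$, where the generator $s=(\Id,\frac12(e_3+e_4))$ makes $L_\pi$ non-standard, I proceed exactly as for $O^4_3,O^4_5,O^4_7$: compute the matrix $Q\in\G(4,\R)$ with $\tau(\text{N}_{\A(4)}(L_\pi))=Q\,\G(4,\Z)\,Q^{-1}$ and check that the elements of $Q\,\G(4,\Z)\,Q^{-1}$ commuting with the (integral) generator $A=\mathrm{diag}(1,1,1,-1)$ are forced back into $\G(4,\Z)$. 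After this I may take $X\in\G(4,\Z)$ throughout.

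For the second step I solve $XAX^{-1}\in H_\pi$. In the $\Z_2$ cases $N^4_1,N^4_2$ ($A=\mathrm{diag}(1,1,1,-1)$) and $N^4_{14}$ ($A=\mathrm{diag}(-1,-1,-1,1)$) the only option is $XAX^{-1}=A$, so $X$ preserves the two eigenspaces of $A$ and takes the block form $\begin{psmallmatrix}B&0\\0&\pm1\end{psmallmatrix}$ with $B\in\G(3,\Z)$. In the cyclic cases of order $k>2$ ($N^4_{15}$--$N^4_{21}$) I use, as in Case~3 of Proposition \ref{conespacedim4}, that $XAX^{-1}$ may be $A$ or $A^{r}$ with $1<r<k$ and $(r,k)=1$; writing each $A$ in its block form (a $\pm E_0$, a permutation, or a signed cyclic block together with a rotation block) and matching the real eigenlines against the rotation planes pins $X$ down to the same block shape, up to the relevant signs and permutations. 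In particular $N^4_{14}$ is the clean case: here the half-translation $\frac12 e_4$ sits in the $+1$-eigenspace and $(A,-\tfrac12 e_4)\in\pi$, so the translation part is not involved and $\mathcal{N}_\pi=\text{N}_{\G(4,\Z)}(H_\pi)=\left\{\begin{psmallmatrix}\G(3,\Z)&0\\0&\pm1\end{psmallmatrix}\right\}$.

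The third and most delicate step is the translation bookkeeping. For each generator $\alpha=(A,v)$ with $A\neq\Id$ I compute the coset $\alpha L_\pi$ (and $\alpha^{r}L_\pi$ when $A\mapsto A^{r}$ is allowed) and keep only those $X$ from the previous step that map the translational part of $\alpha L_\pi$ into that of $\alpha L_\pi$ or of $\alpha^{r}L_\pi$. This is exactly what produces the congruence conditions: for $N^4_1$ the requirement that $B$ fix $\tfrac12 e_1$ modulo $\Z^3$ forces the first column off-diagonal entries to be even, i.e. $B\in\Gamma_0(2)_3$; for $N^4_2$ the extra direction $\tfrac12(e_3+e_4)$ imposes a further evenness on the third column, giving $\widehat{\Gamma}(2)_3$; and for $N^4_{15}$--$N^4_{21}$ the $\tfrac1k e_i$ translations single out the explicit sign-and-rotation generators listed. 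The same care flagged for $B_1$ in Proposition \ref{normalizerdim3} is needed here: the half-integer entries of $v$ genuinely restrict the admissible integer matrices, and an oversight changes the answer.

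Finally I fix the group structure via Lemma \ref{semipro}: $\text{N}_{\A(4)}(\pi)$ splits as $M\ltimes T$ precisely when every admissible matrix $X$ occurs together with $(X,0)$, in which case the matrices found in step three are already the generators of $\mathcal{N}_\pi$. I expect this to hold for all of $N^4_1,N^4_{14},N^4_{15},N^4_{16},N^4_{17},N^4_{18},N^4_{19},N^4_{20},N^4_{21}$. The exception is $N^4_2$, where the coset analysis should show that the matrix $\begin{psmallmatrix}1&0&0&0\\0&1&0&0\\1&0&1&0\\0&0&0&1\end{psmallmatrix}$ normalizes $\pi$ only in company with a nonzero translation; since $\tau$ forgets that translation, this matrix survives as the extra generator, yielding the displayed product with $\left\langle\begin{psmallmatrix}1&0&0&0\\0&1&0&0\\1&0&1&0\\0&0&0&1\end{psmallmatrix}\right\rangle$. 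The main obstacle throughout is precisely this case-by-case coset computation of step three together with the splitting check for $N^4_2$; the eigenspace matching of step two is routine linear algebra by comparison.
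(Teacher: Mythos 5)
Your proposal follows essentially the same route as the paper: reduce to matrices in $\G(4,\Z)$ via the integral representations of Lemma \ref{integerrep}, determine the block form from $XAX^{-1}\in H_\pi$ (allowing $A\mapsto A^r$ for cyclic holonomy of order $k>2$), impose the lattice-coset conditions coming from the non-translational generators to get $\Gamma_0(2)_3$, $\widehat{\Gamma}(2)_3$ and the finite sign-and-rotation groups, and detect the failure of splitting for $N^4_2$ via Lemma \ref{semipro}, which is exactly how the extra generator with nonzero translation arises there. The one place you are less careful than the paper is $N^4_{18}$: because the rotational part of $\alpha$ still mixes into its translation even after the change of representation, the paper does not test whether $(P,0)$ normalizes $\pi$ but whether some $(P,x)$ with $x\in\R^4$ does, so your blanket expectation that the splitting test suffices for $N^4_{18}$ is the single unverified step in an otherwise matching argument.
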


\begin{proof}
First, we explain the case of $N^4_1$. The group $N^4_1$ has standard lattice, translation part involved, and its normalizer has structure of semidirect product. The matrix that normalizes the holonomy has to be of the form 
$$X= \left( \begin{array}{cc}
X_1 & 0 \\
0 & \pm 1
\end{array} \right), \quad \text{with} \quad X_1 \in \G(3, \Z).$$ 
For $X$ to be in $\mathcal{N}_{\pi}$, it also has to preserve the lattice of the generator $\alpha$
\begin{equation*}
\begin{split} 
\alpha L_{\pi} = \{ (A, v) \mid v= \frac{2n_1 +1}{2} e_1 + n_2e_2 + n_3e_3  - n_4e_4 \; 
  \text{and} \; n_i \in \Z, \\
  i=1,2,3,4 \}.  
\end{split}
\end{equation*}

The case of $N^4_2$ is similar to $N^4_1$ but its group has non-standard lattice. Then the form of the matrix $X$ is the same but the lattice of the generator $\alpha$ is different:
\begin{equation*}
\begin{split} 
\alpha L_{\pi} = \{ (A, v) \mid v= \frac{2n_1+1}{2} e_1 + n_2e_2 + \frac{2n_3+n_4}{2}e_3 -\frac{n_4}{2}e_4 \; 
  \text{and} \; n_i \in \Z, \\
  i=1,2,3,4 \},  
\end{split}
\end{equation*}

\noindent with two cases: $n_4$ even or $n_4$ odd. This leads us to have elements in the normalizer that needs the translation part different from zero. Therefore the whole normalizer group is
\begin{equation*}
\begin{split}
\text{N}_{\A(4)}(N^4_2) & = \left( \left\langle \left( \begin{array}{cccc}
2a+1 & b & 2c  & 0 \\ 
2d & 2e+1 & 2f  & 0 \\
2g & h & 2i+1 & 0  \\
0 & 0 & 0  & \pm 1
  \end{array} \right) \in \G(4,\Z) \right\rangle  
   \ltimes T \right) \cdot \langle \xi \rangle, 
\end{split}  
\end{equation*}  
with $a,b,c,d,e,f,g,h,i \in \Z$, the translations $T= \R \oplus \R \oplus \R \oplus {\scriptstyle \frac{1}{2}}\Z$, and  
$$\xi = \left( \left( \begin{array}{cccc}
1 & 0 & 0 & 0 \\
0 & 1 & 0 & 0 \\
1 & 0 & 1 & 0 \\
0 & 0 & 0 & 1  
\end{array} \right) , (0,0,0, \frac{1}{4}) \right) .$$ 

\bigskip

The case of $N^4_{14}$ is simple because the form of the matrix $X$ is also as in $N^4_1$ but now the group $N^4_{14}$ has standard lattice, translation part not involved, and for the generator $\alpha$ we have $(A, -\frac{1}{2}e_4) \in \pi$, then $\mathcal{N}_{\pi}$ is the same as N$_{\G(4, \Z)}(H_{\pi})$.

The remaining groups but $N^4_{18}$ are also simple to compute since they have standard lattice, translation part not involved and N$_{\G(4, \Z)}(H_{\pi})$ is finite, which we computed using Mathematica. Then, we just have to select the matrices that send the lattices of the generators correctly.

The case of $N^4_{18}$ has the generator $(A, \frac{1}{4}(e_1 + e_2) + \frac{1}{2}e_3)$, this means that the rotation of the matrix affects the translation. Even though we change the representation to get standard lattice, the translations of the generators are a bit more complicated, that is why we have to check for each $P \in $N$_{\G(4, \Z)}(H_{\pi})$ if there is an $x \in \R^4$ such that $(P,x)$ normalize $\pi$.    
\end{proof}

\subsection{Moduli spaces}

Having the descriptions of $C_{\pi}$ and $\mathcal{N}_{\pi}$, we can describe the moduli spaces of flat metrics, which we need in order to study their topology: 

\begin{theorem}[\protect{\cite{kang}}] \label{moduli3}
The moduli space of flat metrics of the 3-dimensional closed manifolds are the following:
\renewcommand{\labelenumi}{\arabic{enumi}.}
\begin{enumerate}
\item For $T^3$, $\mathcal{M}_{flat}= \ort(3) \backslash \G(3, \R) / \G(3, \Z)$.
\item For $G_2$, $\mathcal{M}_{flat} = \R^{+} \times ( \ort(2) \backslash \G(2, \R) / \G(2,\Z))$. 
\item For $G_3$, $G_4$ and $G_5$, $\mathcal{M}_{flat} = (\R^+)^2$.
\item For $B_1$, $\mathcal{M}_{flat} =  ( \ort(2) \backslash \G(2, \R) / \Gamma_0(2)) \times \R^{+}$.
\item For $B_2$, $\mathcal{M}_{flat} =  ( \ort(2) \backslash \G(2, \R) / \Gamma(2)\cdot \left\langle Y \right\rangle) \times \R^{+}$, where $Y= \begin{psmallmatrix}
0  & 1 \\ 1 & 0 \end{psmallmatrix}$.
\item For $G_6$, $B_3$ and $B_4$, $\mathcal{M}_{flat} = (\R^{+})^3 $. 
\end{enumerate}
\end{theorem}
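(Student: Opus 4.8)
The plan is to specialize, group by group, the general description
$$\mathcal{M}_{flat}(\R^3/\pi)=\ort(3)\backslash C_\pi/\mathcal{N}_\pi$$
recorded after Theorem \ref{Wolf}. Both inputs are already in hand: the holonomies and generators are listed in Theorem \ref{clasif3}, the normalizers $\mathcal{N}_\pi$ are given in Proposition \ref{normalizerdim3}, and the cone spaces $C_\pi$ are obtained from the commutation equation \eqref{eqcone} exactly as in the four-dimensional Proposition \ref{conespacedim4} (they are also available in \cite{kangkimor} and \cite{kangkimteich}). For the torus the holonomy is trivial, so $C_\pi=\G(3,\R)$ and $\mathcal{N}_\pi=\G(3,\Z)$, giving case~1 at once; the remaining nine cases all have the same shape and I would treat them by the two reductions below.

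First I would carry out the left quotient by $\ort(3)$. Since $\ort(3)\backslash\G(3,\R)$ is identified with the positive-definite Gram matrices $X^tX$, and \eqref{eqcone} forces $X^tX$ to commute with every $A\in H_\pi$, decomposing $\R^3$ into eigenspaces of $H_\pi$ block-diagonalizes $X^tX$. A one-dimensional eigenline contributes a single length, i.e. an $\R^+$ factor; a plane on which a generator acts as a rotation $R(\theta)$ with $\theta\neq 0,\pi$ forces its block to be scalar and so contributes one further $\R^+$; while a plane on which the holonomy acts by $\pm\Id$ leaves its block free, contributing a copy of $\ort(2)\backslash\G(2,\R)$. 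Reading off the holonomies of Theorem \ref{clasif3} this gives, before dividing by $\mathcal{N}_\pi$: $\R^+\times(\ort(2)\backslash\G(2,\R))$ for $G_2$; $(\R^+)^2$ for $G_3,G_4,G_5$; $(\ort(2)\backslash\G(2,\R))\times\R^+$ for $B_1,B_2$; and $(\R^+)^3$ for $G_6,B_3,B_4$.

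Next I would push the right action of $\mathcal{N}_\pi$ through these products. The generators in Proposition \ref{normalizerdim3} respect the block decomposition above: they act on a surviving $\G(2,\R)$ block through an integral subgroup and on the length directions through orthogonal, hence Gram-preserving, matrices. Thus for $G_2$ the $\G(2,\Z)$ block yields $\ort(2)\backslash\G(2,\R)/\G(2,\Z)$ while the $\pm1$ acts trivially on $\R^+$; for $B_1$ and $B_2$ the blocks $\Gamma_0(2)$ and $\Gamma(2)\cdot\langle Y\rangle$, with $Y=\begin{psmallmatrix}0&1\\1&0\end{psmallmatrix}$ the $2\times2$ part of the swap generator, yield the stated arithmetic quotients, again with the $\pm1$ trivial on $\R^+$; and for $G_3,G_4,G_5$ as well as $B_3,B_4$ the normalizer is a finite group of orthogonal matrices, so it fixes the Gram data and the products $(\R^+)^2$, $(\R^+)^3$ are left unchanged.

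The bookkeeping is routine except where $\mathcal{N}_\pi$ fails to split as a block product, and that is where I expect the real work. In $B_2$ the order-two generator interchanges the two basis directions of the $\G(2,\R)$ block, so one must verify that the right action descends precisely to the subgroup $\Gamma(2)\cdot\langle Y\rangle$ of $\G(2,\Z)$ and no larger group. In $G_6$ the normalizer contains the permutation group $\mathcal{S}_3$, which acts on the three length coordinates by permutation; here the genuine content is to identify $(\R^+)^3/\mathcal{S}_3$, via the ordering map onto $\{0<\ell_1\le\ell_2\le\ell_3\}$, with the claimed space and to record that this quotient is contractible, which is all that the topological statement of Theorem \ref{top3} ultimately needs. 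Finally I would keep in mind the correction to $\mathcal{N}_\pi$ for $B_1$ noted after Proposition \ref{normalizerdim3}, so that the translational constraint restricts the block to $\Gamma_0(2)$ and no spurious part of $\G(2,\Z)$ is admitted.
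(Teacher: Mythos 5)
Your proposal is correct and follows essentially the same route as the paper: the paper does not reprove this theorem (it cites Kang), but your strategy of assembling $\ort(3)\backslash C_\pi/\mathcal{N}_\pi$ from the cone space and the normalizer and then factoring block by block is exactly the method the paper itself uses in its proof of Theorem \ref{mod4}, and your eigenspace analysis of the Gram matrices reproduces the block descriptions of Proposition \ref{conespacedim4} in dimension 3. The one point worth recording is the subtlety you already flag for $G_6$: the $\mathcal{S}_3$ in $\mathcal{N}_\pi$ genuinely permutes the three lengths, so the quotient is $(\R^+)^3/\mathcal{S}_3$, a cone with corners rather than literally $(\R^+)^3$; it is contractible, which is all that Theorem \ref{top3} ultimately requires.
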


We proceed to describe the moduli spaces of flat metrics for the 4-dimensional closed flat manifolds with one generator in their holonomy.
\begin{proof}[Proof of Theorem \ref{mod4}]
As we have seen   
$$  \mathcal{M}_{flat}= \ort(4) \backslash C_{\pi} / \mathcal{N}_{\pi} ,$$
and we already have described the spaces $C_{\pi}$ (Proposition \ref{conespacedim4}) and $\mathcal{N}_{\pi}$ (Proposition \ref{normalizadordim4or} and \ref{normalizerdim4non-or}), so we just have to put all the information together.

For the orientable manifolds of cyclic holonomy of order greater than 2 their double quotient has the form
$$  \ort(4) \backslash \ort(4) \cdot (\G(2, \R)\times (\R^+\times \ort(2))) \Big/ \left\langle \begin{psmallmatrix}
\Gamma_1 & 0  \\
0 & R
  \end{psmallmatrix} , \begin{psmallmatrix}
\Gamma_2 & 0  \\
0 & A
  \end{psmallmatrix} \right\rangle , $$

\noindent where $\Gamma_1$, $\Gamma_2 \subset \G(2, \Z)$, and $R$, $A \in \ort(2)$, the respective matrices that appear in $\mathcal{N}_{\pi}$ for each case. Observe that $\begin{psmallmatrix}
C & 0  & 0 \\
0 & 1 & 0  \\
0 & 0 & 1
  \end{psmallmatrix} \notin \mathcal{N}_{\pi} $, where $C \in \Gamma_2$; this means that $\mathcal{N}_{\pi}$ can not be separated as the product of the groups. But we still can factorize the double quotient as follows: 
$$ \left( \ort(2) \backslash \G(2, \R) / \left\langle \Gamma_1, \Gamma_2 \right\rangle \right) \times \left( \ort(2) \backslash \R^+ \times \ort(2) / \left\langle R, A \right\rangle \right) , $$   
this is because the second part of the space $C_{\pi}$ is $\R^+ \times \ort(2)$ and $\left\langle R, A \right\rangle$, the second factor of the group $\mathcal{N}_{\pi}$ is finite and generated by orthogonal matrices. Then we separate the double quotient into two factors and reduce the second factor as in Theorem \ref{moduli3}.

For the non-orientable manifolds with cyclic holonomy of order greater than 2, we can reduce the double quotient because the normalizer is a subgroup of $\ort(4)$ and the cone space $C_{\pi}$ is equal to orthogonal matrices times the positive real numbers. Let us see the case of $N^4_{15}$:

\noindent$\mathcal{M}_{flat}(N^4_{15})  =$

\noindent $\ort(4) \backslash \ort(4) \cdot \left( ((\R^+)^2 \times \ort(2)) \times (\R^+ \times \ort(2)) \right) \Big/ \left\langle \begin{psmallmatrix}
\pm 1 & 0 & 0  & 0 \\ 
0 & 1 & 0  & 0 \\
0 & 0 & 0 & -1  \\
0 & 0 & 1 & 0
  \end{psmallmatrix}, \begin{psmallmatrix}
\pm 1 & 0 & 0  & 0 \\ 
0 & -1 & 0  & 0 \\
0 & 0 & 1 & 0  \\
0 & 0 & 0  & -1
  \end{psmallmatrix} \right\rangle $\\
  
  $  = \R^+ \times \R^+ \times \left( \ort(2) \backslash \R^+ \times \ort (2) / \left\langle \begin{psmallmatrix}
 0 & -1  \\
 1  & 0
  \end{psmallmatrix}, \begin{psmallmatrix}
 1 & 0  \\
  0  & -1
  \end{psmallmatrix} \right\rangle \right) $\\
  
  $\cong \R^+ \times \R^+ \times \R^+.$ 
\end{proof}

\section{Topological description}

In this section we study the topology of the moduli space $\mathcal{M}_{flat}(M)$ for closed manifolds in dimension 3 and some cases in dimension 4. This is related to the study of the action of subgroups of SL$(2, \Z)$ on the hyperbolic plane. 

$\mathcal{M}_{flat}(M)$ can be seen also as a quotient of the Teichm\"uller space by the group $\mathcal{N}_{\pi}$. In \cite{bettiol} Bettiol, Derdzinski and Piccione studied the Teichm\"uller space of flat manifolds proving that it is always a Euclidean space. Since $\mathcal{N}_{L_{\pi}}$ is a conjugate of $\G(n, \mathbb{Z})$ inside $\G(n, \R)$, which is discrete, and $\mathcal{N}_{\pi} \subset \mathcal{N}_{L_\pi}$ then $\mathcal{N}_{\pi}$ is discrete. Thus the Teichm\"uller space and $\mathcal{M}_{flat}(M)$ are orbifolds with the same dimension. Even though $\mathcal{N}_{\pi}$ is a discrete group acting on a Euclidean space, it turns out that $\mathcal{M}_{flat}(M)$ can have interesting topology.

The Teichm\"uller space of flat metrics and $\mathcal{M}_{flat}(M)$ of the 2-torus are very well understood, see \cite{mapclassgr}; in the cited reference it is shown the existence of an homeomorphism 
\begin{equation} \label{homeohyp}
\ort(2) \backslash \G(2, \R) \cong \R^+ \times \mathbb{H}^2 .
\end{equation}
Since $ \mathcal{M}_{flat} ( \text{T}^2)= \text{O}(2)\backslash \text{GL}(2,\mathbb{R}) /\text{GL}(2,\mathbb{Z})$, we still need to see what happens to the action of $\text{GL}(2,\mathbb{Z})$. Observe that we are quotienting out the orientation reversing matrices, therefore we just have to consider the group $\text{SL}(2, \mathbb{Z})$ acting on the previous space. The action of $\text{SL}(2, \mathbb{Z})$ on $\mathbb{H}^2$ is via M\"obius transformations, and we can even compute the fundamental domain to get the next result 
$$ \mathcal{M}_{flat} ( \text{T}^2)\cong \R^+ \times (\mathbb{H}^2 /\text{SL}(2,\mathbb{Z})) \cong \R^+ \times (\mathbb{S}^2 \setminus \{*\}) \cong \R^3. $$

To study the topology of the double quotient of some of our flat manifolds we have to compute the fundamental domain of the action of a subgroup of  SL$(2, \Z)$ on the hyperbolic plane. We use the fact that $\text{SL}(2, \mathbb{Z})$ has two generators $S= \begin{psmallmatrix}
0 & -1 \\ 1 & 0 \end{psmallmatrix}$ and $T= \begin{psmallmatrix}
1 & 1 \\ 0 & 1 \end{psmallmatrix}$, where the map $S$ is an inversion together with a reflection and the map $T$ is just a translation. For general information about this see \cite{apostol}, \cite{milne} or \cite{shimura}. 

The  algorithm to compute the fundamental domain of a subgroup $\Gamma$ of SL$(2, \Z)$ on $\mathbb{H}^2$, deduced from Proposition 2.16  in \cite{milne}, is:

\begin{enumerate}
\item Compute the index of $\Gamma$ in SL$(2, \Z)$.
\item Find representatives in SL$(2, \Z)$ for $\Gamma$.
\item Apply the respective representative transformations to the fundamental domain of SL$(2, \Z)$ on $\mathbb{H}^2$. 
\end{enumerate}

We can proceed with the proof of the remaining results: 

\begin{proof}[Proof of Theorem \ref{top3}]

The proof is done case by case, using the descriptions of the moduli spaces of flat metrics given in Theorem \ref{moduli3}.

\noindent For the 3-torus we have $\mathcal{M}_{flat}(T^3)= \ort(3) \backslash \G(3, \R) / \G(3, \Z).$ This is contractible by the work of Soul\'e \cite{soule}.\\
\noindent For $G_2$ we have 
\begin{equation*}
\begin{aligned}
\mathcal{M}_{flat}(G_2) & = \R^{+} \times ( \ort(2) \backslash \G(2, \R) / \G(2,\Z)) \\
  & \cong \R^+ \times (\R^+ \times \mathbb{H}^2 / \text{SL}(2, \Z)) \\
  & \cong (\R^+)^2 \times \mathbb{S}^2 \setminus \{*\},
\end{aligned}
\end{equation*}
as we saw for the 2-torus.

The next cases are clearly contractible:

\noindent For $G_k$, with $k=3,4,5$, we have $\mathcal{M}_{flat}(G_k) = (\R^+)^2 .$
 
\noindent For $G_6$, $B_i$, with $i=3,4$, we have $\mathcal{M}_{flat}(G_6)= \mathcal{M}_{flat}(B_i) = (\R^{+})^3 . $

  The non-contractible cases are the following:

\noindent For $B_1$, we have 
\begin{equation*}
\begin{aligned}
\mathcal{M}_{flat}(B_1) & =  ( \ort(2) \backslash \G(2, \R) / \Gamma_0(2)) \times \R^{+} \\
     &  \cong (\R^+ \times \mathbb{H}^2 / \Gamma_0(2)^+ ) \times \R^+, \\
\end{aligned}
\end{equation*}
where $\Gamma_0(2)^+$ are the matrices in $\Gamma_0(2)$ with positive determinant. We compute the fundamental domain of $\Gamma_0(2)^+$ on $\mathbb{H}^2$:

1. We will use that the index of $\Gamma(2)^+$ in SL$(2, \Z)$, $[ \text{SL}(2, \Z) : \Gamma(2)^+]$, is 6, see \cite{shimura}, pages 20-22.

\noindent We have that $\Gamma(2)^+ < \Gamma_0(2)^+ < \text{SL}(2, \Z)$, then 
$$ [ \text{SL}(2, \Z) : \Gamma(2)^+]= [ \text{SL}(2, \Z) : \Gamma_0(2)^+][\Gamma_0(2)^+ : \Gamma(2)^+],$$
since the index is multiplicative. This means that $[ \text{SL}(2, \Z) : \Gamma_0(2)^+] \leq 3$, because $[\Gamma_0(2)^+ : \Gamma(2)^+] \neq 1$. 

\noindent On the other hand, we have that for any group $G$ and subgroup $H<G$, if $g\in G$, then $g^n \in H$ with $n=[G:H]$. \\
Consider $B= \left( \begin{array}{cc}
2 & 1 \\ 1 & 1 \end{array} \right) \notin \Gamma_0(2)^+$. Then $B^2 \notin \Gamma_0(2)^+$ but $B^3 \in  \Gamma_0(2)^+$. This means that  $[ \text{SL}(2, \Z) : \Gamma_0(2)^+] \geq 3$. Therefore,
$[ \text{SL}(2, \Z) : \Gamma_0(2)^+] = 3$.

2. Consider\\
$\gamma_1= \Id$, \hspace{0.5cm} $\gamma_2=  \left( \begin{array}{cc}
0 & -1 \\ 1 & 0 \end{array} \right)= S$, \hspace{0.5cm} $\gamma_3= \left( \begin{array}{cc}
0 & -1 \\ 1 & 1 \end{array} \right)=ST$.\\

\noindent They are representatives of $\Gamma_0(2)^+$ since:
\begin{equation*}
\begin{aligned}
\Gamma_0 (2)^+ & = \left\{ \left( \begin{array}{cc}
2a +1 & b \\ 2c & 2d +1 \end{array} \right)  \in \G(2, \Z) \mid a,c,d \in \Z \right\} ,\\
\Gamma_0 (2)^+ \gamma_2 & = \left\{ \left( \begin{array}{cc}
a  & 2b+1 \\ 2c+1 & 2d \end{array} \right)  \in \G(2, \Z) \mid b,c,d \in \Z \right\} ,\\
\Gamma_0 (2)^+ \gamma_3 & = \left\{ \left( \begin{array}{cc}
a  & b \\ 2c+1 & 2d +1 \end{array} \right)  \in \G(2, \Z) \mid c,d \in \Z \right\} .\\
\end{aligned}
\end{equation*}

3. It is enough to see what the two transformations $S$ and $T$ are doing to the fundamental domain of SL$(2, \Z)$ on $\mathbb{H}^2$. Then we make the corresponding compositions to obtain the fundamental domain shown in Figure  \ref{fund0}.


\hspace{3cm}
\begin{figure}[h]
\begin{center}
\scalebox{0.3}[0.3]{\includegraphics{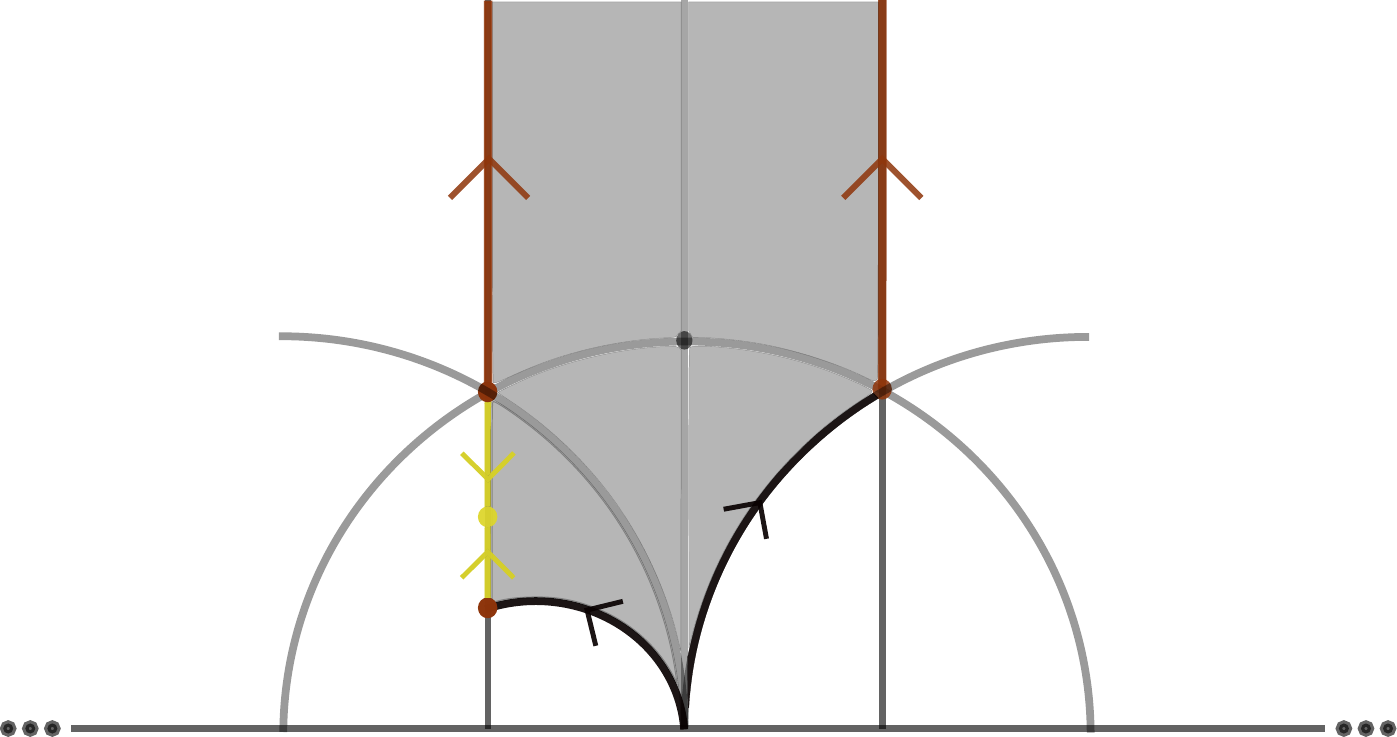}}
\caption{The fundamental domain of $\Gamma_0(2)^+$ on $\mathbb{H}^2$.}
\label{fund0}
\end{center}
\end{figure} 
 

\noindent The borders of the fundamental domain are identified by $T$, $\left( \begin{array}{cc}
1 & 0 \\ -2 & 1 \end{array} \right) $ , and \\
$\left( \begin{array}{cc}
-1 & -1 \\ 2 & 1 \end{array} \right) \in  \Gamma_0(2)^+$. Doing the border identifications we have an orbifold which is homeomorphic to a cylinder. Therefore the moduli space of flat metrics for $B_1$ is 
\begin{equation*}
\begin{aligned}
\mathcal{M}_{flat}(B_1) & \cong (\R^+ \times \mathbb{H}^2 / \Gamma_0(2)^+ ) \times \R^+, \\
             & \cong \mathbb{S}^1\times \R \times (\R^+)^2. 
\end{aligned}
\end{equation*}

\noindent For $B_2$, we have
\begin{equation*}
\begin{aligned}
\mathcal{M}_{flat}(B_2) & =  ( \ort(2) \backslash \G(2, \R) / \Gamma(2)\cdot \left\langle Y \right\rangle) \times \R^{+} \\
        & \cong (\R^+ \times \mathbb{H}^2 / \Gamma(2)\cdot \left\langle Y \right\rangle^+) \times \R^+, \\
\end{aligned}
\end{equation*}
where $\Gamma(2)\cdot \left\langle Y \right\rangle^+$ are the matrices in $\Gamma(2)\cdot \left\langle Y \right\rangle$ with positive determinant. We compute the fundamental domain of $\Gamma(2)\cdot \left\langle Y \right\rangle^+$ on $\mathbb{H}^2$:

1. With a similar procedure as in the case of $\Gamma_0(2)$, we obtain that \\
$[ \text{SL}(2, \Z) : \Gamma(2)\cdot \left\langle Y \right\rangle^+] =3.$  

2. The representatives we choose in SL$(2, \Z)$ for $\Gamma(2)\cdot \left\langle Y \right\rangle^+$ are

$\gamma_1= \Id$, \hspace{0.5cm} $\gamma_2=  \left( \begin{array}{cc}
1 & 1 \\ 0 & 1 \end{array} \right)= T$, \hspace{0.5cm} $\gamma_3= \left( \begin{array}{cc}
1 & -1 \\ 1 & 0 \end{array} \right)=TS$.

3. Since we already expressed the representatives in terms of the generators $T$ and $S$, we can apply them easier to the fundamental domain of SL$(2, \Z)$. In this way we obtain the fundamental domain for $\Gamma(2)\cdot \left\langle Y \right\rangle^+$ on $\mathbb{H}^2$, as shown in Figure \ref{fundE}.


\hspace{3cm}
\begin{figure}[h]
\begin{center}
\scalebox{0.3}[0.3]{\includegraphics{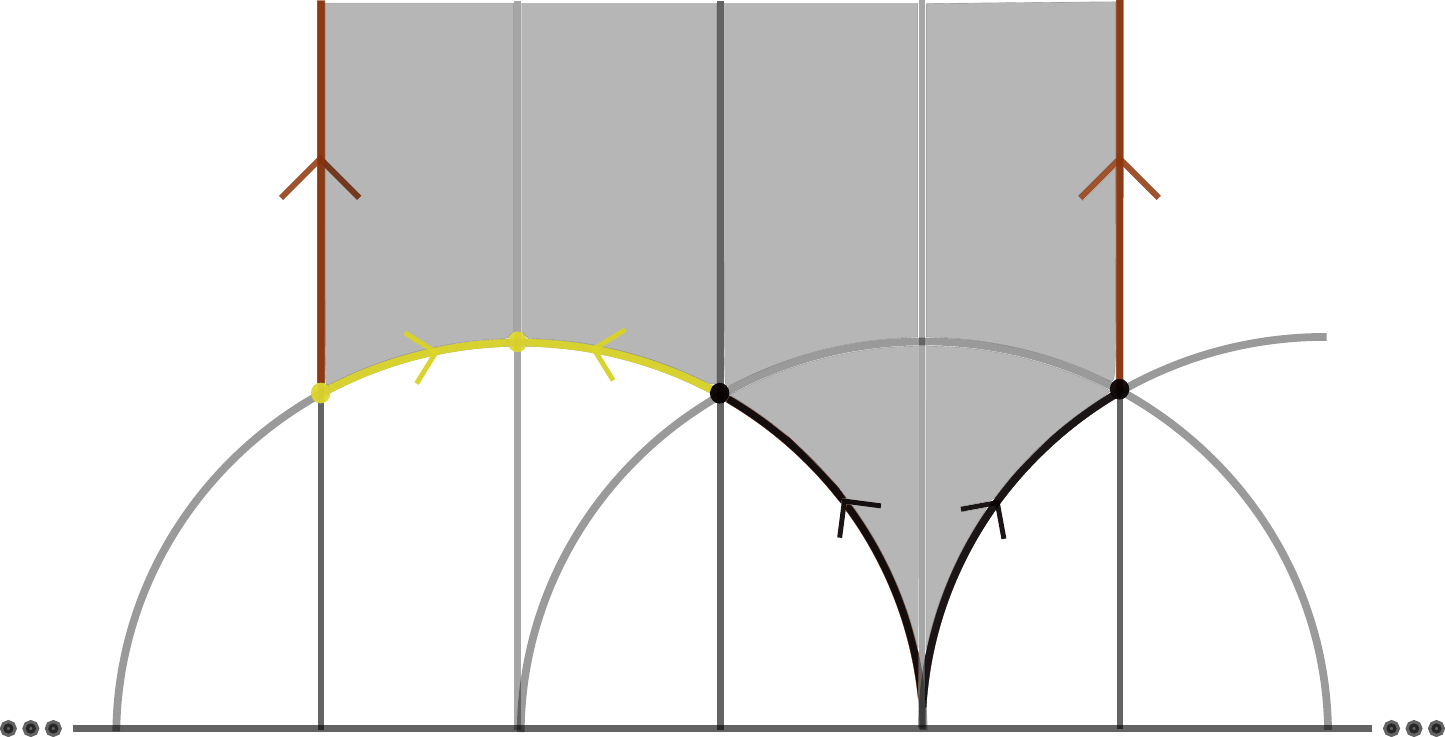}}
\caption{The fundamental domain of $\Gamma(2)Y^+$ on $\mathbb{H}^2$.}
\label{fundE}
\end{center}
\end{figure}

\noindent We notice that the fundamental domain of $\Gamma(2)\cdot \left\langle Y \right\rangle^+$ is quite similar to the one of $\Gamma_0(2)^+$ and it is also homeomorphic to a cylinder. Therefore the moduli space of flat metrics for $B_2$ is

\begin{equation*}
\begin{aligned}
\mathcal{M}_{flat}(B_2) & \cong (\R^+ \times \mathbb{H}^2 / \Gamma(2)\cdot \left\langle Y \right\rangle^+) \times \R^+ \\
        & \cong \mathbb{S}^1\times \R \times (\R^+)^2. 
\end{aligned}
\end{equation*}
   
\end{proof}

\begin{proof}[Proof of Corollary \ref{top4}]
The proof is done case by case. We use some of the descriptions of Theorem \ref{mod4}.

We explain why the following moduli spaces of flat metrics are non-contractible:

\noindent The case of $O^4_2$, since 
\begin{equation*}
\begin{split}
\mathcal{M}_{flat} & = (\text{O}(2)\backslash \text{GL}(2,\mathbb{R}) /\text{GL}(2,\mathbb{Z})) \times (\text{O}(2)\backslash \text{GL}(2,\mathbb{R}) /\Gamma_0(2)) \\ 
        & \cong (\R^+ \times \mathbb{H}^2/\text{SL}(2, \Z)) \times (\R^+ \times \mathbb{H}^2 /\Gamma_0(2)^+)  \\ 
        &  \cong (\R^+)^2 \times \mathbb{S}^2 \setminus \{*\} \times \mathbb{S}^1\times \R . 
\end{split}
\end{equation*}
This double cosets are studied in Theorem \ref{top3}.

\noindent The case of $O^4_7$, since
\begin{equation*}
\begin{split}
 \mathcal{M}_{flat} & = \left( \ort(2) \backslash \G(2, \R) / \Gamma(2) \right) \times \R^+   \\
             & \cong (\R^+ \times \mathbb{H}^2/ \Gamma(2)^+) \times \R^+ 
\end{split}
\end{equation*} 
where $\Gamma(2)^+$ are the matrices in $\Gamma(2)$ with positive determinant. As before, we compute the fundamental domain of $\Gamma(2)^+$ on $\mathbb{H}^2$:

1. $[ \text{SL}(2, \Z) : \Gamma(2)^+] =6 $ (\cite{shimura}, pages 20-22).

2. The representatives that we choose in SL$(2, \Z)$ for $\Gamma(2)^+$ are\\
\noindent $\gamma_1= \Id$, $\gamma_2=  \left( \begin{array}{cc}
0 & -1 \\ 1 & 0 \end{array} \right)= S$, $\gamma_3= \left( \begin{array}{cc}
0 & -1 \\ 1 & 1 \end{array} \right)=ST$, $\gamma_4= \left( \begin{array}{cc}
1 & 1 \\ 0 & 1 \end{array} \right)= T$, \\
 
\noindent  $\gamma_5=  \left( \begin{array}{cc}
1 & -1 \\ 1 & 0 \end{array} \right)= TS$,  
$\gamma_6= \left( \begin{array}{cc}
1 & -2 \\ 1 & -1 \end{array} \right)=TST^{-1}$.

3. Since we already expressed the representatives in terms of the generators $T$ and $S$, we can apply them easier to the fundamental domain of SL$(2, \Z)$. In this way we obtain the fundamental domain for $\Gamma(2)^+$ on $\mathbb{H}^2$, as shown in Figure \ref{fund2}.

\begin{figure}[h]
\begin{center}
\scalebox{0.3}[0.3]{\includegraphics{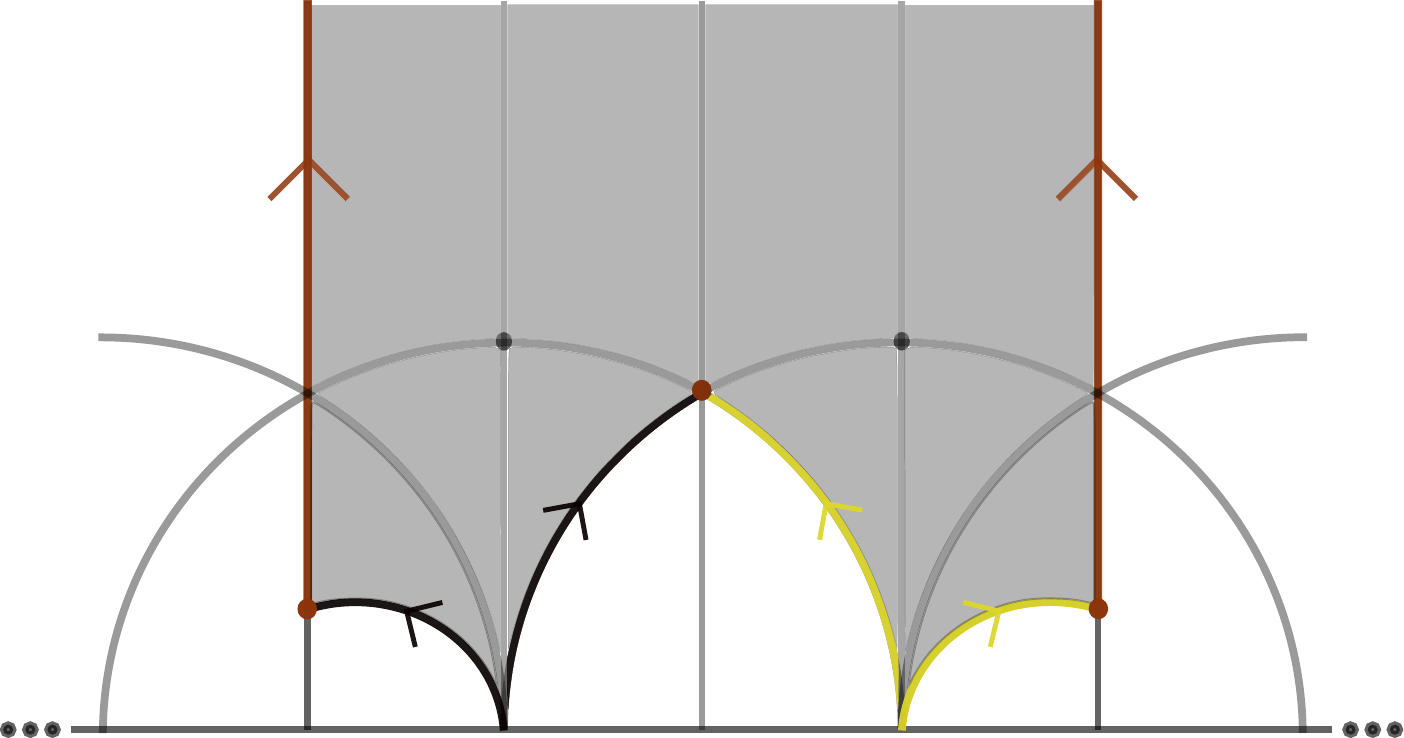}}
\caption{The fundamental domain of $\Gamma(2)^+$ on $\mathbb{H}^2$.}
\label{fund2}
\end{center}
\end{figure} 


\noindent The borders of the fundamental domain are identified by $T^2$, $\left( \begin{array}{cc}
1 & 0 \\ -2 & 1 \end{array} \right) $, and \\
$\left( \begin{array}{cc}
-3 & 2 \\ -2 & 1 \end{array} \right) \in \Gamma(2)^+$. Making the border identifications, as shown in Figure \ref{fund2homeo}, we have an orbifold which is homeomorphic to a 3-punctured sphere.

\begin{figure}[h]
\begin{center}
\scalebox{0.3}[0.3]{\includegraphics{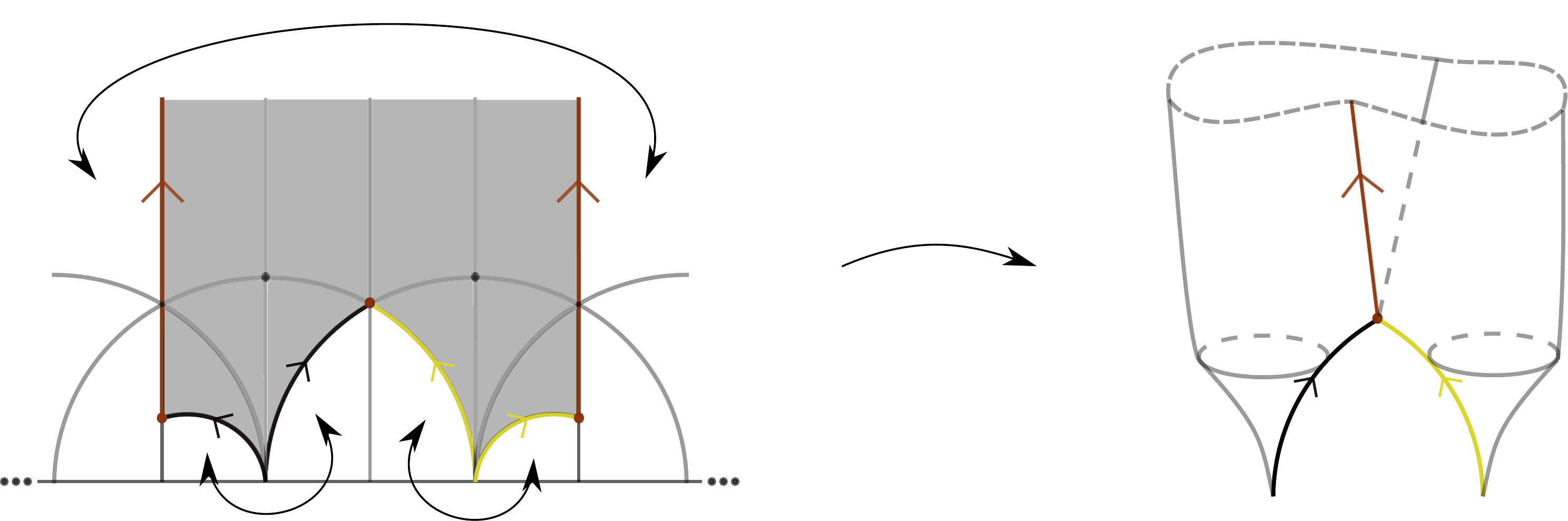}}
\caption{The border identifications.}
\label{fund2homeo}
\end{center}
\end{figure} 

Therefore we can conclude that the moduli space of flat metrics for $O^4_7$ is 
\begin{equation*}
\begin{split}
 \mathcal{M}_{flat} & \cong (\R^+ \times \mathbb{H}^2/ \Gamma(2)^+) \times \R^+ \\
             &  \cong \text{3-punctured sphere} \times (\R^+)^2. 
\end{split}
\end{equation*} 

\noindent The case of $N^4_{14}$ is contractible, because we have \\
$\mathcal{M}_{flat}(N^4_{14}) = \text{O}(3)\backslash \text{GL}(3,\mathbb{R}) /\G(3, \Z) \times \R^+ $, and that double quotient is contractible; see Soulé \cite{soule}. 

\noindent The cases of $N^4_{15}$, $N^4_{16}$, $N^4_{19}$, $N^4_{20}$, and $N^4_{21}$ are also contractible, because $\mathcal{M}_{flat} = (\R^+)^3 .$

\end{proof}

\bibliographystyle{siam}


\end{document}